\journal{Linear Algebra and its Applications}
\begin{document}

\begin{frontmatter}

\title{On the positive stability of $P^2$-matrices}

\author{Olga Y. Kushel}
\address{Institut f\"{u}r Mathematik, MA 4-2, Technische Universit\"{a}t Berlin, Strasse des 17. Juni 136  D-10623 Berlin, Germany}
\ead{kushel@mail.ru}

\begin{abstract}
In this paper, we study the positive stability of $P$-matrices. We prove that a $P$-matrix ${\mathbf A}$ is positively stable if ${\mathbf A}$ is a $Q^2$-matrix and there is at least one nested sequence of principal submatrices of ${\mathbf A}$ each of which is also a $Q^2$-matrix. This result generalizes the result by Carlson which shows the positive stability of sign-symmetric $P$-matrices and the result by Tang, Simsek, Ozdaglar and Acemoglu which shows the positive stability of strictly row (column) square diagonally dominant for every order of minors $P$-matrices.
\end{abstract}

\begin{keyword}
$P$-matrices \sep $Q$-matrices \sep stable matrices \sep exterior products \sep $P$-matrix powers \sep stabilization
\MSC[2010] 15A48 \sep 15A18 \sep 15A75
\end{keyword}

\end{frontmatter}

\newtheorem{theorem}{Theorem}
\newtheorem{lemma}{Lemma}
\newtheorem{corollary}{Corollary}
\newproof{proof}{Proof}
\newproof{pmain}{Proof of Theorem \ref{maintheorem}}
\newproof{pcarl}{Proof of Theorem \ref{Carl}}
\newproof{ptang}{Proof of Theorem \ref{Tang}}
\newcommand{\diag}{{\rm diag}}
\linenumbers

\section{Introduction}
In this paper, we study $P$-matrices:

{\bf Definition.} A real $n \times n$ matrix $\mathbf A$ is called a {\it $P$-matrix} if all its principal minors are positive, i.e the inequality $A \left(\begin{array}{ccc}i_1 & \ldots & i_k \\ i_1 & \ldots & i_k \end{array}\right) > 0$
holds for all $(i_1, \ \ldots, \ i_k), \ 1 \leq i_1 < \ldots < i_k \leq n$, and all $k, \ 1 \leq k \leq n$.

Having been introduced by Fiedler and Ptak in the 1960s, $P$-matrices have found a great number of applications in various disciplines including physics, economics, communication networks and biology. Spectral properties of $P$-matrices have received great attention. The following result characterizes real eigenvalues of $P$-matrices (see \cite{FIP}, p. 385, Theorem 3.3).

\begin{theorem}[Fiedler, Pt\'{a}k]\label{FiP}
The following properties of a real matrix $\mathbf A$ are equivalent:
\begin{enumerate}
\item All principal minors of $\mathbf A$ are positive.
\item Every real eigenvalue of $\mathbf A$ as well as of each principal submatrix of $\mathbf A$ is positive.
\end{enumerate}
\end{theorem}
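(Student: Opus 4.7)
The plan is to prove the two implications separately, using the characteristic polynomial and its coefficients as the main bridge between principal minors and eigenvalues. Recall that if $\mathbf A$ is $n\times n$, then
\[
\det(\lambda I - \mathbf A) = \lambda^n - E_1\lambda^{n-1} + E_2\lambda^{n-2} - \cdots + (-1)^n E_n,
\]
where $E_k$ equals the sum of all $k\times k$ principal minors of $\mathbf A$. Every principal submatrix of a $P$-matrix is clearly a $P$-matrix as well, since its principal minors are principal minors of the ambient matrix, so I can concentrate on eigenvalues of $\mathbf A$ itself in each direction.

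For (1)$\Rightarrow$(2), assume $\mathbf A$ is a $P$-matrix. Then every $E_k$ is a sum of positive numbers, hence positive. If $\lambda\le 0$ were a real eigenvalue, substituting $\mu=-\lambda\ge 0$ would give
\[
0=\det(\lambda I-\mathbf A)=(-1)^n\bigl(\mu^n+E_1\mu^{n-1}+\cdots+E_n\bigr),
\]
a strictly positive quantity up to sign (since $E_n>0$ alone guarantees a positive value even at $\mu=0$), a contradiction. Applying the same argument to each principal submatrix (also a $P$-matrix) gives the full statement.

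For (2)$\Rightarrow$(1), I would argue by induction on $n$. The case $n=1$ is immediate. For the step, note that every real eigenvalue of a principal submatrix of a principal submatrix of $\mathbf A$ is again a real eigenvalue of a principal submatrix of $\mathbf A$, and is therefore positive by hypothesis. So the induction hypothesis applies to every proper principal submatrix and shows that all proper principal minors of $\mathbf A$ are positive. It remains to handle $\det(\mathbf A)$. Factor the characteristic polynomial over $\mathbb C$ as $\prod_i(\lambda-\lambda_i)$; then $\det(\mathbf A)=\prod_i\lambda_i$. The non-real eigenvalues occur in conjugate pairs contributing $|\lambda_i|^2>0$ each. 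The real eigenvalues are strictly positive by hypothesis (in particular $0$ is not an eigenvalue, since $0$ is real and would violate positivity). Hence $\det(\mathbf A)>0$, completing the induction.

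The main obstacle I anticipate is the second direction: one has to leverage the hypothesis on \emph{all} principal submatrices (not just $\mathbf A$) to reduce to the determinant, and then use the complex-conjugate pairing trick to force the product of eigenvalues to be positive. The forward direction is essentially a sign-pattern argument on the characteristic polynomial and is routine once one writes its coefficients in terms of principal minors.
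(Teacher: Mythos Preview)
Your proof is correct. Both directions are handled cleanly: the forward implication via the sign pattern of the characteristic polynomial with coefficients $E_k>0$, and the converse by induction together with the conjugate-pairing argument for $\det(\mathbf A)$.

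Note, however, that the paper does \emph{not} supply its own proof of this theorem; it is quoted as a classical result of Fiedler and Pt\'{a}k with a reference to \cite{FIP}, p.~385, Theorem~3.3. So there is no ``paper's proof'' to compare against. Your argument is essentially the standard one found in the literature (and close in spirit to the original), so nothing is lost.

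One minor stylistic point: in the inductive step for (2)$\Rightarrow$(1) you might state explicitly that the base case holds because a $1\times 1$ matrix $[a]$ has $a$ as its unique (real) eigenvalue, hence $a>0$ by hypothesis, which is exactly the condition that its sole principal minor be positive. You have this implicitly, but spelling it out costs one line.
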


However, $P$-matrices may have non-real eigenvalues as well. Let us mention the following result by Kellogg concerning complex eigenvalues of $P$-matrices (see \cite{KEL}, p. 174, Corollary 1).
\begin{theorem}[Kellogg]\label{Kell}
Let $\lambda$ be an eigenvalue of an $n \times n$ $P$-matrix ${\mathbf A}$. Then
$$|{\rm arg}(\lambda)| < \pi - \frac{\pi}{n}.$$
\end{theorem}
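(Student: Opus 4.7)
The plan is to reduce the claim to a zero-location result for polynomials with positive coefficients. Since $\mathbf{A}$ is a $P$-matrix, each coefficient $e_k$ in the characteristic polynomial expansion
\[
\det(xI - \mathbf{A}) = x^n - e_1 x^{n-1} + e_2 x^{n-2} - \cdots + (-1)^n e_n
\]
is strictly positive, because $e_k$ equals the sum of all $k \times k$ principal minors of $\mathbf{A}$. Consequently, for any eigenvalue $\lambda$ of $\mathbf{A}$, the number $y := -\lambda$ is a zero of
\[
q(y) = y^n + e_1 y^{n-1} + e_2 y^{n-2} + \cdots + e_n,
\]
a polynomial of degree $n$ whose coefficients are all strictly positive.

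Next I would establish the following ancillary lemma: if $q(y) = y^n + a_1 y^{n-1} + \cdots + a_n$ has positive coefficients $a_1, \ldots, a_n > 0$, then every root $y$ satisfies $|\arg(y)| > \pi/n$ (taking $n \geq 2$). The proof is a direct sign check on the imaginary part. Writing $y = r e^{i\theta}$ with $r > 0$ and $0 < \theta \leq \pi/n$, for each $j \in \{1, \ldots, n-1\}$ the angle $j\theta$ lies strictly in $(0, \pi)$, so $\sin(j\theta) > 0$ and hence $\mathrm{Im}(a_{n-j} y^j) > 0$. The leading term contributes $\mathrm{Im}(y^n) = r^n \sin(n\theta) \geq 0$, while $a_n$ is real. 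Summing, $\mathrm{Im}(q(y)) > 0$, so $q(y) \neq 0$. The case $\theta = 0$ gives $q(y) > 0$ trivially, and the range $-\pi/n \leq \theta < 0$ is handled by the symmetric argument (all imaginary parts flip sign).

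Finally, since $\lambda = -y$, the arguments of $\lambda$ and $y$ differ by $\pi$ (taken modulo $2\pi$ and normalized to $(-\pi,\pi]$), so $|\arg(y)| > \pi/n$ translates directly into $|\arg(\lambda)| < \pi - \pi/n$, yielding Kellogg's bound. The one subtle point I expect is the boundary case $\theta = \pm\pi/n$, where $y^n$ becomes real and ceases to contribute to $\mathrm{Im}(q(y))$; here strictness of the inequality is saved by the middle terms $a_{n-j} y^j$ for $1 \leq j \leq n-1$, whose imaginary parts all share a common nonzero sign. No heavier machinery than elementary angle arithmetic appears to be needed, and the whole argument rests only on the positivity of elementary symmetric functions of the eigenvalues, which is the defining $P$-property.
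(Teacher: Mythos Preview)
Your argument is correct. The paper does not actually prove Kellogg's theorem; it merely quotes it from \cite{KEL} as background, so there is no ``paper's own proof'' to compare against. What you have written is essentially the classical proof: the characteristic polynomial of a $P$-matrix has alternating-sign coefficients with strictly positive magnitudes, so after the substitution $y=-\lambda$ one obtains a polynomial with all positive coefficients, and a direct imaginary-part computation shows such a polynomial cannot vanish in the sector $|\arg y|\le \pi/n$. Your handling of the boundary $\theta=\pm\pi/n$ is correct, since for $n\ge 2$ the intermediate terms $a_{n-j}y^{j}$ with $1\le j\le n-1$ supply a strictly nonzero imaginary part even when $y^{n}$ becomes real. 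The only cosmetic point is that the stated inequality is vacuous (indeed fails) for $n=1$, but that case is trivial and is customarily excluded.
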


The following generalization of the class of $P$-matrices preserves the above spectral properties (see \cite{HER2}, p. 83, Theorem 1).

{\bf Definition.} A real matrix $\mathbf A$ is called a {\it $Q$-matrix} if the inequality
$$\sum_{(i_1, \ldots, i_k)}A \left(\begin{array}{ccc}i_1 & \ldots & i_k \\ i_1 & \ldots & i_k \end{array}\right) > 0$$
holds for all $k, \ 1 \leq k \leq n$.

\begin{theorem}[Hershkowitz]\label{Her}
A set $\{\lambda_1, \ \ldots, \ \lambda_n\}$ of complex numbers is a spectrum of some $P$-matrix if and only if it is a spectrum of some $Q$-matrix.
\end{theorem}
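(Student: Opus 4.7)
My plan is to handle the two directions separately, using the classical identity that links sums of principal minors to elementary symmetric polynomials of the spectrum. For the forward direction I would simply observe that every $P$-matrix is by definition a $Q$-matrix (each principal minor is positive, so any sum of principal minors is positive), and so every $P$-matrix spectrum is automatically a $Q$-matrix spectrum.

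For the converse, I would first reformulate the $Q$-matrix condition in purely spectral terms. Expanding $\det(tI-\mathbf A)=\prod_i(t-\lambda_i)$ in two different ways yields the standard identity
$$\sum_{1\le i_1<\cdots<i_k\le n}A\left(\begin{array}{ccc}i_1 & \ldots & i_k\\ i_1 & \ldots & i_k\end{array}\right)=e_k(\lambda_1,\ldots,\lambda_n),$$
so that $\mathbf A$ is a $Q$-matrix if and only if $e_k(\mathrm{spec}(\mathbf A))>0$ for every $k\in\{1,\ldots,n\}$. The theorem is thus equivalent to the following realization statement: given a multiset $\Lambda$ of complex numbers, closed under conjugation, with $e_k(\Lambda)>0$ for every $k$, construct an $n\times n$ $P$-matrix with spectrum $\Lambda$.

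I would attempt this construction by induction on $n$. The base case $n=1$ is immediate, since then $\lambda_1=e_1>0$ and $(\lambda_1)$ is a $P$-matrix. For the inductive step the natural strategy is to peel off either a positive real eigenvalue (as a $1\times 1$ diagonal block) or a complex conjugate pair with positive real part (as a $2\times 2$ real Jordan block); note that the hypothesis $e_1(\Lambda)>0$ always guarantees at least one eigenvalue with positive real part, so some candidate block is always available. One then verifies that the reduced multiset still has all elementary symmetric functions positive, invokes the hypothesis to build an $(n-1)$- or $(n-2)$-dimensional $P$-matrix realizing this smaller spectrum, and assembles.

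The chief obstacle is that a naive block-diagonal assembly need not be a $P$-matrix: a $Q$-matrix spectrum may contain a complex conjugate pair $a\pm ib$ with $a\le 0$ (for instance $n=3$ and $\Lambda=\{\mu,\,-1\pm 2i\}$ with $\mu\in(2,\,5/2)$ satisfies all three inequalities $e_k>0$), in which case the corresponding $2\times 2$ Jordan block has non-positive diagonal entries and fails the $P$-property; moreover, it is not a priori clear that every legal splitting produces a reduced spectrum whose own $e_k$'s are all positive. Overcoming this requires, first, a careful choice of which eigenvalue or pair to split off so that positivity of the elementary symmetric functions propagates down, and second, replacing the naive block-diagonal assembly by a well-chosen similarity $S^{-1}(\cdot)S$ that preserves the spectrum while redistributing entries so that every principal minor becomes positive. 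Establishing that such a similarity always exists---and that the spectral condition can always be made to descend under a suitable splitting---is the technical heart of the argument.
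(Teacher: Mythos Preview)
The paper does not prove this theorem; it is quoted from \cite{HER2} as background and used only to obtain Corollary~\ref{hersh}. There is therefore no in-paper proof to compare against, and I assess your proposal on its own terms.

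Your forward direction and your spectral reformulation of the $Q$-condition as $e_k(\Lambda)>0$ for all $k$ are both correct. The converse, however, has a genuine gap. You correctly identify the two obstructions to the naive induction---removing an eigenvalue or pair need not preserve positivity of the remaining $e_k$, and a conjugate pair $a\pm ib$ with $a\le 0$ (as in your own $\{-1\pm 2i\}$) cannot be realized by \emph{any} real $2\times 2$ $P$-matrix, since such a matrix would have trace $2a\le 0$---but your proposed repair, ``a well-chosen similarity $S^{-1}(\cdot)S$,'' is not a construction; it is a restatement of the goal. Producing a matrix similar to some realization of $\Lambda$ with all principal minors positive is exactly the statement to be proved, and nothing in your outline indicates how to choose $S$, why such an $S$ exists, or how the induction hypothesis would help manufacture it. In your own $n=3$ example no $2\times 2$ block carrying $-1\pm 2i$ can ever be a $P$-matrix, so the construction must be genuinely three-dimensional from the outset; your scheme offers no mechanism for that. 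To close the gap one needs an explicit global construction of an $n\times n$ $P$-matrix with prescribed characteristic polynomial $t^n-e_1t^{n-1}+\cdots+(-1)^n e_n$ whenever all $e_k>0$, which is what the cited literature supplies and which does not proceed by block assembly plus similarity.
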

\begin{corollary}\label{hersh}
Every real eigenvalue of a $Q$-matrix $\mathbf A$ is positive. Moreover, if $\lambda$ is an eigenvalue of an $n \times n$ $Q$-matrix then
$$|{\rm arg}(\lambda)| < \pi - \frac{\pi}{n}. $$
\end{corollary}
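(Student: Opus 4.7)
The plan is to deduce the corollary from the three theorems already stated, using Hershkowitz's spectral equivalence (Theorem \ref{Her}) as the bridge between $Q$-matrices and $P$-matrices. Given an $n\times n$ $Q$-matrix $\mathbf{A}$ with spectrum $\{\lambda_1,\ldots,\lambda_n\}$, Theorem \ref{Her} produces an $n\times n$ $P$-matrix $\mathbf{B}$ whose spectrum is exactly the same multiset. Thus every spectral claim about $\mathbf{A}$ becomes a spectral claim about $\mathbf{B}$, where the classical $P$-matrix results apply directly.

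For the first assertion, I would argue as follows: if some $\lambda_i$ is real, then $\lambda_i$ is a real eigenvalue of the $P$-matrix $\mathbf{B}$, and the Fiedler--Pt\'ak characterization (Theorem \ref{FiP}, in particular the implication $(1)\Rightarrow(2)$ applied to $\mathbf{B}$ itself) forces $\lambda_i>0$. For the argument bound, I would simply invoke Kellogg's theorem (Theorem \ref{Kell}) on the same $\mathbf{B}$: since $\lambda_i$ is an eigenvalue of an $n\times n$ $P$-matrix, $|\arg(\lambda_i)|<\pi-\pi/n$.

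I do not expect any real obstacle. The one mild point worth noting is that Theorem \ref{FiP} says more than what I need (it also controls the real eigenvalues of every principal submatrix), but for a $Q$-matrix no such submatrix statement is available or claimed, so I only use the top-level spectral part. In other words, the corollary is essentially a one-line consequence of Theorem \ref{Her} combined with Theorems \ref{FiP} and \ref{Kell}, and the proof will consist of exactly this chain of implications.
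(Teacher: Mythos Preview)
Your proposal is correct and is exactly the intended argument: the paper places Corollary~\ref{hersh} immediately after Theorem~\ref{Her} without a separate proof, precisely because it follows by transferring the spectrum of the $Q$-matrix to an $n\times n$ $P$-matrix via Theorem~\ref{Her} and then applying Theorems~\ref{FiP} and~\ref{Kell}. There is nothing to add.
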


It is easy to see that a $P$-matrix may not be positively stable (a matrix is called {\it positively stable} if all of its eigenvalues have positive real parts). This paper deals with the relation between the positive stability and the positivity of principal minors of matrix powers. The major result on this topic was obtained by Carlson (see \cite{CARL2}, p. 1).

{\bf Definition.} A matrix $\mathbf A$ is called {\it sign-symmetric} if the inequality
$$A\left(\begin{array}{ccc}
  i_1 & \ldots & i_k \\
  j_1 & \ldots & j_k
\end{array}\right)A\left(\begin{array}{ccc}
  j_1 & \ldots & j_k \\
  i_1 & \ldots & i_k
\end{array}\right) \geq 0 $$
holds for all sets of indices $(i_1, \ \ldots, \ i_k), \ (j_1, \ \ldots, \ j_k)$, where $1 \leq i_1 < \ldots < i_k \leq n $, $1 \leq j_1 < \ldots < j_k \leq n $, $k = 1, \ \ldots, \ n$.

\begin{theorem}[Carlson]\label{Carl}
A sign-symmetric $P$-matrix is positively stable.
\end{theorem}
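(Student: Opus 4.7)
The plan is to reduce Carlson's theorem to the main result of this paper (Theorem~\ref{maintheorem}, proved below). The two hypotheses to be verified are that a sign-symmetric $P$-matrix $\mathbf{A}$ is a $Q^2$-matrix, and that $\mathbf{A}$ admits a nested chain of principal submatrices each of which is also a $Q^2$-matrix.

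The decisive computation is to show that $\mathbf{A}^2$ is in fact a $P$-matrix; once this is established, $\mathbf{A}^2$ is a fortiori a $Q$-matrix, and $\mathbf{A}$ is by definition a $Q^2$-matrix. For a fixed index set $I$ of size $k$, the Cauchy-Binet formula gives
\[
\mathbf{A}^2\binom{I}{I} \;=\; \sum_{|J|=k} \mathbf{A}\binom{I}{J}\,\mathbf{A}\binom{J}{I}.
\]
Every summand on the right is nonnegative by the sign-symmetry hypothesis, and the summand corresponding to $J=I$ equals $\mathbf{A}\binom{I}{I}^{2}$, which is strictly positive because $\mathbf{A}$ is a $P$-matrix. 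Hence every principal minor of $\mathbf{A}^2$ is strictly positive, so $\mathbf{A}^2$ is a $P$-matrix.

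Both sign-symmetry and the $P$-matrix property are manifestly inherited by arbitrary principal submatrices, since those properties are defined as sign conditions on minors and the minors of $\mathbf{A}[I]$ form a subfamily of the minors of $\mathbf{A}$. The same Cauchy-Binet argument applied to $\mathbf{A}[I]$ therefore shows that $(\mathbf{A}[I])^2$ is a $P$-matrix, and hence every principal submatrix $\mathbf{A}[I]$ is itself a $Q^2$-matrix. The ascending chain $\mathbf{A}[\{1\}] \subset \mathbf{A}[\{1,2\}] \subset \cdots \subset \mathbf{A}[\{1,\ldots,n\}] = \mathbf{A}$ then supplies the nested sequence required by Theorem~\ref{maintheorem}, which delivers the positive stability of $\mathbf{A}$.

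The only substantive mathematical step above is the sign-symmetric version of Cauchy-Binet; the real difficulty of this proof is entirely deferred to Theorem~\ref{maintheorem}, whose task of extracting positive stability from the nested $Q^2$ hypothesis is the heart of the paper. In particular, no delicate case analysis is needed here: the nonnegativity of the Cauchy-Binet summands is exactly the defining inequality of sign-symmetry, and its preservation under passage to a principal submatrix is immediate.
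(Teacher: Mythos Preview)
Your proof is correct and follows essentially the same route as the paper's: both reduce Carlson's theorem to Theorem~\ref{maintheorem} by observing that a sign-symmetric $P$-matrix and all its principal submatrices are $Q^2$-matrices. You supply the explicit Cauchy--Binet computation showing $\mathbf{A}^2$ is a $P$-matrix, which the paper merely asserts as ``easy to see'' (having flagged this implication earlier as point~(a) in the introduction); otherwise the arguments coincide.
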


The proof of Carlson's theorem is based on the following implications.
\begin{enumerate}
\item[\rm (a)] $\mathbf A$ is a sign-symmetric $P$-matrix $\Rightarrow$ ${\mathbf A}^2$ is a $P$-matrix.
\item[\rm (b)] $\mathbf A$ is a sign-symmetric $P$-matrix $\Rightarrow$ ${\mathbf D}{\mathbf A}$ is a sign-symmetric $P$-matrix and $({\mathbf D}{\mathbf A})^2$ is a $P$-matrix for every diagonal matrix $\mathbf D$ with positive principal diagonal entries.
\end{enumerate}

Several attempts to generalize Carlson's theorem to wider classes of matrices were made afterwards. The following classes of matrices were introduced in \cite{HERK2}.

{\bf Definition.} A matrix $\mathbf A$ is called a {\it $P^2$-matrix ($Q^2$-matrix)} if both $\mathbf A$ and ${\mathbf A}^2$ are $P$- (respectively, $Q$-) matrices.

The following question was raised (see \cite{HERK2}, p. 122, question 6.2).

{\bf Are $P^2$-matrices positively stable?}

This question is still open. For the case of $Q^2$-matrices, Hershkowitz and Keller proved their positive stability for $n \leq 3$ (see \cite{HERK2}, p. 112, Proposition 3.1). However, the example given in \cite{HERJ1} (see \cite{HERJ1}, p. 164) and the reasoning in \cite{HERK2} (see \cite{HERK2}, p. 123, Corollary 6.9) show that $Q^2$-matrices of order $n \geq 4$ may have eigenvalues in the left-hand side of the complex plane. Some conditions sufficient for the positive stability were introduced in \cite{TSO}.

{\bf Definition.} An $n \times n$ matrix ${\mathbf A}$ is called strictly row square diagonally dominant for every order of minors if the following inequalities hold:
$$\left(A\left(\begin{array}{c} \alpha \\
\alpha \end{array}\right)\right)^2 > \sum_{\alpha,\beta \in [n], \alpha \neq \beta}\left( A\left(\begin{array}{c} \alpha \\
\beta \end{array}\right)\right)^2$$
for any $\alpha = (i_1, \ \ldots, \ i_k)$, $\beta = (j_1, \ \ldots, \ j_k)$ and all $k = 1, \ \ldots, \ n$.
A matrix $\mathbf A$ is called strictly column square diagonally dominant if ${\mathbf A}^T$ is strictly row square diagonally dominant.

It follows from the reasoning of \cite{TSO} that $P$-matrices which satisfy the above conditions are $Q^2$-matrices as well. The above conditions also guarantee the positive stability of a $P$-matrix (see \cite{TSO}, p. 27, Theorem 3).

\begin{theorem}[Tang et al.]\label{Tang} Let $\mathbf A$ be a $P$-matrix. If $\mathbf A$ is strictly row (column) diagonally dominant for every order of minors, then $\mathbf A$ is positively stable.
\end{theorem}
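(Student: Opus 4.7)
The plan is to derive Theorem~\ref{Tang} as a corollary of the main theorem of this paper, which guarantees positive stability of any $P$-matrix that is both a $Q^2$-matrix and admits a nested chain of principal submatrices each of which is itself a $Q^2$-matrix. Under this reduction, the whole argument boils down to extracting two properties from the strict row square diagonal dominance hypothesis: that $\mathbf A$ is itself a $Q^2$-matrix, and that every principal submatrix of $\mathbf A$ is also a $Q^2$-matrix.

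For the first, I would use the Cauchy--Binet identity to expand the sum of all $k$-th order principal minors of $\mathbf A^2$ as
$$\sum_{\alpha} A^2\!\left(\begin{array}{c}\alpha\\ \alpha\end{array}\right) \; = \; \sum_{\alpha} \left(A\!\left(\begin{array}{c}\alpha\\ \alpha\end{array}\right)\right)^2 + \sum_{\alpha\neq\beta} A\!\left(\begin{array}{c}\alpha\\ \beta\end{array}\right) A\!\left(\begin{array}{c}\beta\\ \alpha\end{array}\right),$$
where $\alpha$ and $\beta$ range over strictly increasing $k$-tuples from $[n]$. Applying the elementary inequality $2xy\geq -(x^2+y^2)$ to each unordered pair $\{\alpha,\beta\}$ bounds the cross-term sum from below by $-\sum_\alpha\sum_{\beta\neq\alpha}\left(A\!\left(\begin{array}{c}\alpha\\ \beta\end{array}\right)\right)^2$, and the strict row square diagonal dominance, summed over $\alpha$, says precisely that this quantity is strictly greater than $-\sum_\alpha\left(A\!\left(\begin{array}{c}\alpha\\ \alpha\end{array}\right)\right)^2$. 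Substituting back, the sum of $k$-th order principal minors of $\mathbf A^2$ is strictly positive for every $k$, so $\mathbf A^2$ is a $Q$-matrix and hence $\mathbf A$ is a $Q^2$-matrix.

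For the second, I would observe that the diagonal dominance hypothesis is inherited verbatim by every principal submatrix, since the minors of a principal submatrix form a subset of the minors of $\mathbf A$ and the off-diagonal sum on the right-hand side only shrinks. Hence every principal submatrix is again a $P$-matrix satisfying the same hypothesis and, by the argument above, is a $Q^2$-matrix. In particular the maximal chain ${\mathbf A}[\{1\}] \subset {\mathbf A}[\{1,2\}] \subset \cdots \subset \mathbf A$ is a nested sequence of $Q^2$-matrices, so the main theorem applies and yields positive stability. The column-dominant case reduces to the row-dominant one by passing to $\mathbf A^T$, which has the same spectrum as $\mathbf A$.

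The main obstacle is the first step: spotting that the elementary bound $2xy\geq -(x^2+y^2)$ is the precise bridge converting a quadratic diagonal dominance condition into trace-positivity of the squared $k$-th compound matrix. Once that link is forged, the rest of the argument is a routine inheritance observation together with a single invocation of the paper's main theorem.
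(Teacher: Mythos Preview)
Your proposal is correct and follows essentially the same route as the paper's own proof: both deduce Theorem~\ref{Tang} from Theorem~\ref{maintheorem} by showing that strict row square diagonal dominance (i) is inherited by every principal submatrix and (ii) forces $\mathbf A^2$ to be a $Q$-matrix via the Cauchy--Binet expansion together with the elementary inequality $(x+y)^2\ge 0$ (equivalently, your $2xy\ge -(x^2+y^2)$). The only cosmetic difference is that the paper applies the dominance bound to each $A^2\!\left(\begin{smallmatrix}\alpha\\\alpha\end{smallmatrix}\right)$ first and then sums, whereas you sum first and then apply the bound; the two are the same computation.
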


The main result of this paper generalizes both the results of \cite{CARL2} and \cite{TSO}. It also provides sufficient conditions for the stability of $P$-matrices which are also $Q^2$-matrices.

\begin{theorem} \label{maintheorem} Let an $n \times n$ $P$-matrix $\mathbf A$ also be a $Q^2$-matrix. Let $\mathbf A$ have a nested sequence of principal submatrices each of which is also a $Q^2$-matrix. Then $\mathbf A$ is positively stable.
\end{theorem}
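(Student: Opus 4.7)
The plan is by strong induction on $n$. The base $n=1$ is immediate, since a $1\times 1$ $P$-matrix is a positive scalar. For the inductive step, permute rows and columns simultaneously so that the $(n-1)$-th member of the given nested family of $Q^2$-submatrices occupies the leading $(n-1)\times(n-1)$ block, and denote it $\mathbf A_{n-1}$. As a principal submatrix of a $P$-matrix, $\mathbf A_{n-1}$ is itself a $P$-matrix; it is $Q^2$ by hypothesis; and the first $n-2$ entries of the original nested family still form a nested $Q^2$-family inside $\mathbf A_{n-1}$. The inductive hypothesis therefore yields that $\mathbf A_{n-1}$ is positively stable.

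Writing $\mathbf A=\begin{pmatrix}\mathbf A_{n-1}&\mathbf b\\ \mathbf c^{T}&a_{nn}\end{pmatrix}$ with $a_{nn}>0$, I would introduce the homotopy
\[
\mathbf A(t)=\begin{pmatrix}\mathbf A_{n-1}& t\mathbf b\\ t\mathbf c^{T}&a_{nn}\end{pmatrix},\qquad t\in[0,1],
\]
so that $\mathbf A(0)=\mathbf A_{n-1}\oplus(a_{nn})$ is positively stable and $\mathbf A(1)=\mathbf A$. A principal minor of $\mathbf A(t)$ indexed by a subset not containing $n$ is $t$-independent and positive, whereas one indexed by $S\cup\{n\}$ is, by the Schur complement, of the form $a_{nn}\det(\mathbf A_{n-1}[S,S])-t^{2}\cdot(\,\cdot\,)$, a linear function of $t^{2}$ taking positive values at both $t=0$ and $t=1$ and therefore positive on the whole interval. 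Hence $\mathbf A(t)$ is a $P$-matrix for every $t\in[0,1]$.

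The heart of the argument is to exclude any eigenvalue of $\mathbf A(t)$ from the imaginary axis. If $\lambda=ir$ with $r\neq 0$ were an eigenvalue of some $\mathbf A(t^{\ast})$, then $-r^{2}$ would be a negative real eigenvalue of $\mathbf A(t^{\ast})^{2}$, which Corollary \ref{hersh} forbids for any $Q$-matrix; so the task reduces to showing that $\mathbf A(t)^{2}$ is a $Q$-matrix for every $t\in[0,1]$. Setting $s_k(t)=\sum_{|S|=k}\det\bigl(\mathbf A(t)^{2}[S,S]\bigr)$, the Cauchy--Binet formula $\det(\mathbf A(t)^{2}[S,S])=\sum_{|T|=k}\det(\mathbf A(t)[S,T])\det(\mathbf A(t)[T,S])$ combined with the explicit block form of $\mathbf A(t)$ shows that each $s_k(t)$ is a polynomial in $t^{2}$ of degree at most two whose boundary values are strictly positive: $s_k(0)>0$ because $\mathbf A(0)^{2}=\mathbf A_{n-1}^{2}\oplus(a_{nn}^{2})$ is a $Q$-matrix (using that $\mathbf A_{n-1}$ is $Q^{2}$), and $s_k(1)>0$ because $\mathbf A$ is $Q^{2}$. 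The main obstacle is to promote these endpoint positivities to positivity on the whole interval $[0,1]$; I expect to do this by exploiting the compound-matrix identity $(\mathbf A(t)^{(k)})^{2}=(\mathbf A(t)^{2})^{(k)}$ and by identifying the $t^{4}$-coefficient of $s_k(t)$ with a Gram-like sum of $k\times k$ minors of the augmented matrix obtained from $\mathbf A$ by zeroing out $a_{nn}$, which should combine with the $P$-matrix bounds already established to force $s_k(t)>0$ throughout.

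Once this positivity is in hand, the eigenvalues of $\mathbf A(t)$ depend continuously on $t$ and never touch the imaginary axis, so the number of eigenvalues in the open right half-plane is constant along the homotopy; it equals $n$ at $t=0$ by the inductive hypothesis, and therefore also at $t=1$, proving that $\mathbf A$ is positively stable and closing the induction.
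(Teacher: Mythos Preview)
Your argument has a genuine gap at the step you yourself flag as ``the main obstacle'': showing that $s_k(t)>0$ for all $t\in[0,1]$. You correctly observe that $s_k$ is a polynomial in $u=t^2$ of degree at most two, say $s_k=\alpha+\beta u+\gamma u^2$, with $\alpha>0$ and $\alpha+\beta+\gamma>0$. But endpoint positivity of a quadratic does not yield interior positivity, and even if your proposed identification of the $t^4$-coefficient $\gamma$ as a ``Gram-like'' (hence nonnegative) sum succeeds, it still does not close the argument: take $\alpha=1$, $\beta=-10$, $\gamma=10$, which is negative at $u=\tfrac12$. The cross-term $\beta$ carries the interaction between the block-diagonal part and the off-diagonal vectors $\mathbf b,\mathbf c$, and nothing in your hypotheses controls its sign or size. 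Your sketch gives no mechanism for bounding $\beta$, so the claim that $\mathbf A(t)^2$ is a $Q$-matrix for all $t$ is unsupported.

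There is also a structural mismatch: your inductive step uses only that $\mathbf A$ and $\mathbf A_{n-1}$ are $Q^2$ (the induction hypothesis on $\mathbf A_{n-1}$ gives mere stability, not finer information), whereas the paper genuinely needs the \emph{entire} nested chain. The paper's route is quite different from yours. It passes to $\mathbf B=\mathbf P_\theta^{-1}\mathbf A^{-1}\mathbf P_\theta$ and, via Schur complements and Sylvester's identity, converts the nested $Q^2$ hypothesis into the trace conditions ${\rm Tr}\bigl((\mathbf B^{(j)}[1,\ldots,m])^2\bigr)>0$ for \emph{all} $1\le m\le j\le n$. These are then used to build, level by level, a positive diagonal matrix $\mathbf D=\diag(\epsilon_1,\ldots,\epsilon_n)$ that simultaneously stabilizes $\mathbf B$ (Fisher--Fuller) and keeps $(t\mathbf I+(1-t)\mathbf D)\mathbf B$ a $Q^2$-matrix for every $t\in[0,1]$; the key device is a binomial expansion of $(t\mathbf I+(1-t)\mathbf D)^{(j)}$ in terms of generalized compound matrices, reducing the $Q^2$ condition along the path to finitely many inequalities ${\rm Tr}(\mathbf D_k^{(j)}\mathbf A^{(j)}\mathbf D_m^{(j)}\mathbf A^{(j)})>0$, each of which can be forced by choosing the $\epsilon_i$ small enough in turn. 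Only then does the Carlson-type homotopy (no imaginary-axis crossings because the square stays a $Q$-matrix) apply. In short, the paper's homotopy is a diagonal scaling chosen with great care using the whole nested family, not a one-parameter damping of a single off-diagonal block; the step you left open is exactly where all the work lies.
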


Let us give an example illustrating Theorem \ref{maintheorem}.

{\bf Example.} Let $n = 4$ and
$${\mathbf A} = \left(\begin{array}{cccc} 6 & -30 & 1 & 1 \\ 1 & 2 & 1 & -5 \\ 1 & 1 & 10 & -10 \\ 1 & 1 & 1 & 10
\end{array}\right). $$

In this case, we have
$${\mathbf A}^{(2)} = \left(\begin{array}{cccccc} 42 & 5 & -31 & -32 & 148 & -6 \\
 36 & 59 & -61 & -301 & 299 & -20 \\
 36 & 5 & 59 & -31 & -301 & 9 \\
 -1 & 9 & -5 & 19 & -15 & 40 \\
 -1 & 0 & 15 & 1 & 25 & 15 \\
 0 & -9 & 20 & -9 & 20 & 110 \\
\end{array}\right);$$

$${\mathbf A}^{(3)} = \left(\begin{array}{cccc}
 383 & -241 & 254 & -1166 \\
 5 & 599 & 75 & -474 \\
 -324 & 720 & 631 & -3329 \\
 9 & -20 & 135 & 245 \\
\end{array}\right);$$

$$\det{\mathbf A} = 5491.$$

As we can see, $\mathbf A$ is a $P$-matrix. Since $\mathbf A$ is not sign-symmetric, it does not satisfy the conditions of Theorem \ref{Carl} (Carlson). Also, it does not satisfy the conditions of row (column) square diagonal dominance for every order of minors.

Let us check ${\mathbf A}^2$. We have
$${\mathbf A}^{2} = \left(\begin{array}{cccc}
 8 & -238 & -13 & 156 \\
 4 & -30 & 8 & -69 \\
 7 & -28 & 92 & -204 \\
 18 & -17 & 22 & 86 \\ \end{array}\right);$$
$$({\mathbf A}^{2})^{(2)} = \left(\begin{array}{cccccc}
712 & 116 & -1176 & -2294 & 21102 & -351 \\
1442 & 827 & -2724 & -22260 & 52920 & -11700 \\
4148 & 410 & -2120 & -5457 & -17816 & -4550 \\
98 & 312 & -333 & -2536 & 4188 & 4716 \\
472 & -56 & 1586 & -524 & -3753 & 2206 \\
385 & -1502 & 4274 & 948 & -5876 & 12400 \end{array}\right);$$
$$({\mathbf A}^{2})^{(3)} =  \left(\begin{array}{cccc}
52694 & -30462 & 82071 & -1463580 \\
-23656 & 421076 & 29530 & -655561 \\
-354897 & 1030264 & -79550 & -2879700 \\
-38188 & 78151 & 119046 & -390404 \\ \end{array}\right);$$
$${\rm Tr}({\mathbf A}^{2}) = 156 > 0;$$
$${\rm Tr}(({\mathbf A}^{2})^{(2)}) = 5530 > 0;$$
$${\rm Tr}(({\mathbf A}^{2})^{(3)}) = 3816 > 0;$$
$${\det}({\mathbf A}^2) = 30151081 > 0.$$
Thus ${\mathbf A}^{2}$ is a $Q^2$-matrix. Nevertheless, for the diagonal matrix
$${\mathbf D} = \diag\{1, \ 1, \ 0.1, \ 0.1\},$$
we have $${\mathbf D}{\mathbf A} = \left(\begin{array}{cccc} 6 & -30 & 1 & 1 \\ 1 & 2 & 1 & -5 \\ 0.1 & 0.1 & 1 & -1 \\ 0.1 & 0.1 & 0.1 & 1 \\ \end{array}\right);$$
$$({\mathbf D}{\mathbf A})^2 = \left(\begin{array}{cccc}
 6.2 & -239.8 & -22.9 & 156 \\
 7.6 & -26.4 & 3.5 & -15 \\
 0.7 & -2.8 & 1.1 & -2.4 \\
 0.81 & -2.69 & 0.4 & 0.5 \\
\end{array}\right)$$
and $${\rm Tr}(({\mathbf D}{\mathbf A})^2) = -18.6 < 0.$$
Thus $({\mathbf D}{\mathbf A})^2$ is not even a $Q$-matrix, Implication (b) does not hold and we can not apply the reasoning of the proof of Theorem \ref{Carl} (Carlson).

However, the matrix ${\mathbf A}$ satisfies the conditions of Theorem \ref{maintheorem}, since it has a nested sequence of principal submatrices, each of which is also a $Q^2$-matrix. We obtain this sequence by deleting the first, the second and the third row and column, consequently:
$${\mathbf A}_1 = \left(\begin{array}{cccc} 2 & 1 & -5 \\ 1 & 10 & -10 \\ 1 & 1 & 10 \\ \end{array}\right).$$
In this case
$${\mathbf A}_1^2 = \left(\begin{array}{cccc} 0 & 7 & -70 \\ 2 & 91 & -205 \\ 13 & 21 & 85 \\ \end{array}\right);$$
$$({\mathbf A}_1^2)^{(2)} = \left(\begin{array}{cccc} -14 & 140 & 4935 \\ -91 & 910 & 2065 \\ -1141 & 2835 & 12040 \\ \end{array}\right);$$
$$\det({\mathbf A}_1^2) = 60025 > 0;$$
$${\rm Tr}(({\mathbf A}_1^2)) = 176 > 0;$$
$${\rm Tr}(({\mathbf A}_1^2)^{(2)}) = 12936 > 0.$$
Thus ${\mathbf A}_1$ is a $Q^2$-matrix. Then we obtain
$${\mathbf A}_{12} = \left(\begin{array}{cc} 10 & -10 \\1 & 10 \end{array}\right)$$
with
$${\mathbf A}_{12}^2 = \left(\begin{array}{cc} 90 & -200 \\ 20 & 90 \\ \end{array}\right);$$
$$\det({\mathbf A}_{12}^2) = 12100 > 0;$$
and $${\rm Tr}(({\mathbf A}_1^2)^{(2)}) = 180 > 0.$$
Thus ${\mathbf A}_{12}$ is also a $Q^2$-matrix. For the matrix ${\mathbf A}_{123}$ which consists of only one positive entry $10$, the conditions are obvious.

It is not difficult to check that $\mathbf A$ is positively stable, with two pairs of complex adjoint eigenvalues $\lambda_{1,2} \approx 10.1979 \pm 2.0302i$ and $\lambda_{3,4} \approx 3.80215 \pm 6.02751i$.

This paper is organized as follows. In Section 2, we develop the necessary methods for the proof, namely, we describe the exterior products of operators and matrices and recall the definitions and statements concerning additive compound matrices introduced in \cite{FID}. In Section 3, the results on the stabilization by a diagonal matrix are studied. Recovering the proof from \cite{BAL}, we show the possibility of choosing a stabilization matrix with certain additional properties. Section 4 deals with the proof of the main result (Theorem \ref{maintheorem}). In Section 5, we analyze the known classes of positively stable $P^2$-matrices.
\section{Exterior products and additive compound matrices}
Let $\{e_1, \ \ldots, \ e_n\}$ be an arbitrary basis in $R^n$. Let $x_1, \ \ldots, \ x_j$ $ \ (2 \leq j \leq n)$ be any vectors in $R^n$ defined by their coordinates $x_i = (x_i^1, \ \ldots, \ x_i^n)$, $i = 1, \ \ldots, \ j$ in the basis $\{e_1, \ \ldots, \ e_n\}$. Then the {\it exterior product} $x_1 \wedge \ldots \wedge x_j$ of the vectors $x_1, \ \ldots, \ x_j$ is a vector in $ R^{n\choose j}$ (${n \choose j} = \frac{n!}{j!(n-j)!}$) with the coordinates of the form
$$(x_1 \wedge \ldots \wedge x_j)^{\alpha}:= \left|\begin{array}{ccc} x_1^{i_1} & \ldots & x_j^{i_1} \\
\ldots & \ldots & \ldots \\
x_1^{i_j} & \ldots & x_j^{i_j} \\
 \end{array}\right|,$$ where $\alpha$ is the number of the set of indices $(i_1, \ \ldots,  \ i_j) \subseteq [n]$ in the lexicographic ordering ( $[n]$, as usual, denotes the set $\{1, \ \ldots, \ n\}$).

We consider the $j$th exterior power $\wedge^j { R}^n$ of the space ${ R}^n$ as the space ${ R}^{n \choose j}$. The set of all exterior
products of the form $e_{i_1} \wedge \ldots \wedge e_{i_j}$, where
$1 \leq i_1 < \ldots < i_j \leq n$ forms a canonical basis in $\wedge^j { R}^n$ (see, for example, \cite{GLALU}).

Let us recall the following definition (see \cite{GROT}, p. 326).

{\bf Definition.} Given $j$ linear operators $A_1, \ \ldots, \ A_j$ on ${R}^n$, they define a linear operator $A_1 \wedge \ldots \wedge A_j$ on $\wedge^j
{R}^n$ by the following rule
$$ (A_1\wedge \ldots \wedge
A_j)(x_1 \wedge \ldots \wedge x_j) = \frac{1}{j!}\sum_\theta
A_{\theta(1)}x_1\wedge \ldots \wedge A_{\theta(j)}x_j ,$$
where the sum is taken with respect to all the permutations $\theta = (\theta(1), \ \ldots, \ \theta(j))$ of the set of indices $[j]$.
The operator $A_1 \wedge \ldots \wedge A_j$ is called an {\it exterior product} of the operators $A_1, \ \ldots, \ A_j$.

The above definition implies the property of commutativity: $$A_1 \wedge \ldots \wedge A_j = A_{\theta(1)} \wedge \ldots \wedge A_{\theta(j)}$$ for every permutation $\theta = (\theta(1), \ \ldots, \ \theta(j))$ of the set of indices $[j]$.

The following property also easily follows from this definition. Let $A_1, \ \ldots, \ A_j$ and $B_1, \ \ldots, \ B_j$ be two sets of linear operators on $R^n$. Then
$$(A_1\wedge \ldots \wedge
A_j) (B_1\wedge \ldots \wedge B_j) =
\frac{1}{j!}\sum_{\theta}(A_1B_{\theta(1)}\wedge
\ldots \wedge A_{j}B_{\theta(j)}).$$
In particular, $$(A_1\wedge \ldots \wedge
A_j)(B\wedge \ldots \wedge B) = A_1B \wedge \ldots \wedge A_jB,$$
for any linear operator $B$ on $R^n$.

Given $n \times n$ matrices ${\mathbf A}_1, \  \ldots, \ {\mathbf A}_j$, let us consider all of the possible
"mixed" minors of the $j$-th $(1 \leq j \leq n)$ order,
constructed of columns of different matrices. We denote such minors as follows: $$(A_{1}, \ldots, A_{j}) \left(\begin{array}{ccc} i_1 & \ldots & i_j \\
k_1^{\theta(1)} & \ldots & k_j^{\theta(j)}
\end{array}\right), $$
where $i_1, \ldots, i_j$ $(1 \leq i_1 < \ldots <
i_j \leq n)$ and $k_1,
 \ldots, k_j$ $(1 \leq k_1 < \ldots < k_j \leq n)$ are the numbers of rows and, respectively, columns, which form the minor. The notation $k_m^{\theta(m)}$ shows that the column with number $k_m$ belongs to the matrix ${\mathbf A}_{\theta(m)}$ (here $\theta = (\theta(1), \ \ldots, \ \theta(j))$ is an arbitrary permutation of the set of indices $[j]$).

{\bf Example.} Let the first matrix be ${\mathbf A} = \left(\begin{array}{cc} a_{11} & a_{12} \\
a_{21} & a_{22}
\end{array}\right)$ and the second matrix be $\mathbf B = \left(\begin{array}{cc} b_{11} & b_{12} \\
b_{21} & b_{22}
\end{array}\right).$ Let us take $(i_1, \ i_2) = (1, \ 2)$, $(k_1, \ k_2) = (1, \ 2)$. Then
$$(A, B)\left(\begin{array}{cc} 1 & 2 \\
1^2 & 2^1
\end{array}\right) = \left|\begin{array}{cc} b_{11} & a_{12} \\
b_{21} & a_{22}
\end{array}\right| \qquad \mbox{and} \qquad (A, B)\left(\begin{array}{cc} 1 & 2 \\
1^1 & 2^2
\end{array}\right) = \left|\begin{array}{cc} a_{11} & b_{12} \\
a_{21} & b_{22}
\end{array}\right|.$$

{\bf Definition.} An {\it exterior product} of the matrices ${\mathbf A}_1, \ \ldots, \ {\mathbf A}_j$ is an ${n \choose j} \times {n \choose j}$ matrix ${\mathbf A}_1 \wedge
\ldots \wedge {\mathbf A}_j$ with the entries $\zeta_{\alpha\beta},$ $1 \leq \alpha, \beta \leq {n \choose j}$ of the following form: $$ \zeta_{\alpha\beta} = \frac{1}{j!}\sum\limits_{\theta}(A_{1}, \ldots, A_{j}) \left(\begin{array}{ccc} i_1 & \ldots & i_j \\
k_1^{\theta(1)} & \ldots & k_j^{\theta(j)} \\
\end{array}\right),$$ where $\alpha, \ \beta$ are the numbers in the lexicographic numeration of the sets of indices $(i_1, \ \ldots, \ i_j)$ and $(k_1, \ \ldots, \ k_j)$, respectively.

 {\bf Example.} Let $${\mathbf A} =
\left(\begin{array}{ccc}
 a_{11} & a_{12} & a_{13} \\
 a_{21} & a_{22} & a_{23} \\
 a_{31} & a_{32} & a_{33}  \\
\end{array}\right), \qquad {\mathbf B} =
\left(\begin{array}{ccc}
 b_{11} & b_{12} & b_{13}  \\
 b_{21} & b_{22} & b_{23} \\
 b_{31} & b_{32} & b_{33}  \\
\end{array}\right).$$ Then
$${\mathbf A}\wedge {\mathbf B} = \left(\begin{array}{ccc}
  \zeta_{11} & \zeta_{12} & \zeta_{13} \\
  \zeta_{21} & \zeta_{22} & \zeta_{23} \\
  \zeta_{31} & \zeta_{32} & \zeta_{33}
\end{array}\right),$$
where $$ \zeta_{11} = \frac{1}{2}\left(\left|\begin{array}{cc}a_{11} &
b_{12} \\ a_{21} & b_{22} \\ \end{array}\right| +
 \left|\begin{array}{cc} b_{11} & a_{12} \\ b_{21} & a_{22} \\ \end{array}\right|\right);$$
$$ \zeta_{12} = \frac{1}{2}\left(
\left|\begin{array}{cc} a_{11} &
b_{13} \\ a_{21} & b_{23} \\ \end{array}\right| +
   \left|\begin{array}{cc} b_{11} & a_{13} \\ b_{21} & a_{23} \\ \end{array}\right|\right);$$
$$ \zeta_{13} = \frac{1}{2}\left( \left|\begin{array}{cc}a_{12} &
b_{13} \\ a_{22} & b_{23} \\ \end{array}\right|
 + \left|\begin{array}{cc} b_{12} & a_{13} \\ b_{22} & a_{23} \\ \end{array}\right|\right);$$
$$ \zeta_{21} = \frac{1}{2}\left(\left|\begin{array}{cc} a_{11} &
b_{12} \\ a_{31} & b_{32} \\ \end{array}\right|
 + \left|\begin{array}{cc} b_{11} & a_{12} \\ b_{31} & a_{32} \\ \end{array}\right| \right);$$
$$ \zeta_{22} = \frac{1}{2}\left( \left|\begin{array}{cc} a_{11} &
b_{13} \\ a_{31} & b_{33} \\ \end{array}\right|
  + \left|\begin{array}{cc}b_{11} & a_{13} \\b_{31} & a_{33} \\ \end{array}\right|\right);$$
$$ \zeta_{23} = \frac{1}{2}\left(\left|\begin{array}{cc}a_{12} &
b_{13} \\ a_{32} & b_{33} \\ \end{array}\right|
 + \left|\begin{array}{cc}b_{12} & a_{13} \\ b_{32} & a_{33} \\ \end{array}\right| \right);$$
$$ \zeta_{31} = \frac{1}{2}\left(\left|\begin{array}{cc}a_{21} &
b_{22} \\ a_{31} & b_{32} \\ \end{array}\right|
   + \left|\begin{array}{cc} b_{21} & a_{22} \\ b_{31} & a_{32} \\ \end{array}\right|\right);$$
$$ \zeta_{32} = \frac{1}{2}\left( \left|\begin{array}{cc} a_{21} &
b_{23} \\ a_{31} & b_{33} \\ \end{array}\right|
  + \left|\begin{array}{cc} b_{21} & a_{23} \\ b_{31} & a_{33} \\ \end{array}\right|\right);$$
$$ \zeta_{33} = \frac{1}{2}\left(\left|\begin{array}{cc} a_{22} &
b_{23} \\ a_{32} & b_{33} \\ \end{array}\right|
 + \left|\begin{array}{cc} b_{22} & a_{23} \\ b_{32} & a_{33} \\ \end{array}\right|\right).$$

Let $j$ linear operators $A_1, \ \ldots, \ A_j: R^n \rightarrow R^n$ be given by their matrices ${\mathbf A}_1, \ \ldots, \ {\mathbf A}_j$, respectively, in the basis $\{e_1, \ \ldots, \ e_n\}$. It is easy to see that the matrix of the operator $A_1 \wedge \ldots \wedge A_j$ in the basis
$\{e_{i_1} \wedge \ldots \wedge e_{i_j}\}$, where $1 \leq i_1<
\ldots < i_j \leq n$, equals the exterior product ${\mathbf A}_1 \wedge
\ldots \wedge {\mathbf A}_{j}$ of the matrices ${\mathbf A}_1, \ \ldots, \ {\mathbf A}_j$.

The following special case of the exterior products of operators is studied in \cite{FID} (see \cite{FID}, p. 394).

{\bf Definition.} For a linear operator $A: R^n \rightarrow R^n$ and two positive integers $j, \ m$ $(1 \leq m \leq j \leq n)$, the {\it generalized $j$-th compound
operator} $\wedge_m^j A$ is defined by the following formula:
$$\wedge_m^j A = \underbrace{A \wedge \ldots \wedge A}_{m} \wedge
\underbrace{I \wedge \ldots \wedge I}_{j-m}.$$

The matrix of $\wedge_m^j A$ in the canonical basis $\{e_{i_1} \wedge \ldots \wedge e_{i_j}\}$, where $1 \leq i_1< \ldots < i_j \leq n$ is called the {\it generalized $j$th compound matrix} of the initial matrix $\mathbf A$ and denoted ${\mathbf A}_{m}^{(j)}$. It follows from the above definitions that ${\mathbf A}_{m}^{(j)} = \underbrace{{\mathbf A} \wedge \ldots \wedge {\mathbf A}}_{m} \wedge
\underbrace{{\mathbf I} \wedge \ldots \wedge {\mathbf I}}_{j-m}$.

{\bf Example.}
Let us consider an $n \times n$ diagonal matrix $\mathbf D$ of the form:
$${\mathbf D} = \diag\{\epsilon_1, \ \epsilon_2, \ \ldots, \ \epsilon_n\},$$
where $1 = \epsilon_1 > \epsilon_2 > \ldots > \epsilon_n > 0$.
In this case, ${\mathbf D}_m^{(j)} = \underbrace{{\mathbf D} \wedge \ldots \wedge {\mathbf D}}_{m} \wedge \underbrace{{\mathbf I} \wedge \ldots \wedge \mathbf{I}}_{j-m}$, where $1 \leq m \leq j \leq n$, is an ${n \choose j}\times {n \choose j}$ diagonal matrix of the form:
\begin{equation}\label{DiagForm}
{\mathbf D}_m^{(j)} = \diag\{d^m_{11}, \ \ldots, \ d^m_{{n \choose j}{n \choose j}}\},
\end{equation}
where $$d^m_{\alpha\alpha} = \sum_{(k_1, \ldots, k_m) \subseteq (i_1, \ \ldots, \ i_j)}\left|\begin{array}{cccc} \epsilon_{k_1} & 0 &  \ldots & 0 \\
  0 & \epsilon_{k_2} & \ldots & 0 \\
  \ldots &  \ldots & \ldots & \ldots \\
  0 & 0 & \ldots & \epsilon_{k_m} \end{array}\right| = \sum_{(k_1, \ldots, k_m) \subseteq (i_1, \ \ldots, \ i_j)}\epsilon_{k_1}\ldots \epsilon_{k_m},$$
$\alpha$ is the number in the lexicographic numeration of the set of indices $(i_1, \ \ldots, \ i_j)$, $1 \leq i_1 \leq \ldots \leq i_j \leq n$, the sum is taken with respect to all the possible subsets of $m$ indices $(k_1, \ldots, k_m)$ from the set $(i_1, \ \ldots, \ i_j)$, $i_1 \leq k_1 < \ldots < k_m \leq i_j$.
For example, in the case of $n = 3$, we have $${\mathbf D} =
\left(\begin{array}{ccc}
 \epsilon_1 & 0 & 0 \\
 0 & \epsilon_2 & 0 \\
 0 & 0 & \epsilon_3  \\
\end{array}\right), \qquad {\mathbf I} =
\left(\begin{array}{ccc}
 1 & 0 & 0  \\
 0 & 1 & 0 \\
 0 & 0 & 1  \\
\end{array}\right).$$ In this case
$${\mathbf D}_1^{(2)} = \left(\begin{array}{ccc}
  \epsilon_1 + \epsilon_2 & 0 & 0 \\
  0 & \epsilon_1 + \epsilon_3 & 0 \\
  0 & 0 & \epsilon_2 + \epsilon_3
\end{array}\right).$$

In the case, when $m = j$, the above definition gives the linear operator $\wedge^j A$, defined by the equality
$$ (\wedge^j A)(x_1 \wedge \ldots \wedge x_j) = Ax_1 \wedge \ldots \wedge Ax_j.$$
The operator $\wedge^j A$ is called the {\it $j$th exterior power} of the initial operator $A$ or the {\it jth compound operator}. It is easy to see that $\wedge^1 A = A$ and $\wedge ^n A$ is one-dimensional and coincides with $\det A$.

If ${\mathbf A} =
\{a_{ij}\}_{i,j = 1}^n$ is the matrix of $A$ in the basis $\{e_1, \ \ldots, \ e_n\}$, then
the matrix of $\wedge^j A$ in the basis
$\{e_{i_1} \wedge \ldots \wedge e_{i_j}\}$, where $1 \leq i_1<
\ldots < i_j \leq n$, equals the $j$th compound matrix $
{\mathbf A}^{(j)}$ of the initial matrix ${\mathbf A}$. (Here the {\it $j$th compound matrix $
{\mathbf A}^{(j)}$} consists of all the minors of the $j$th order
$A\left(\begin{array}{ccc}
  i_1 &  \ldots & i_j \\
  k_1 & \ldots & k_j \end{array}\right)$, where $1 \leq i_1<
\ldots < i_j \leq n, \ 1 \leq k_1< \ldots < k_j \leq n$, of the
initial $n \times n$ matrix ${\mathbf A}$, listed in the lexicographic order (see, for example,
\cite{PINK})).

The following properties of compound matrices are well-known.
\begin{enumerate}
\item[\rm 1.] Let ${\mathbf A}, \ {\mathbf B}$ be $n \times n$ matrices. Then $({\mathbf A}{\mathbf B})^{(j)} = {\mathbf A}^{(j)}{\mathbf B}^{(j)}$ for $j = 1, \ \ldots, \ n$ (the Cauchy--Binet formula).
\item[\rm 2.] The $j$-th compound matrix ${\mathbf A}^{(j)}$ of an invertible matrix $\mathbf A$ is also invertible and the following equality holds: $({\mathbf A}^{(j)})^{-1} = ({\mathbf A}^{-1})^{(j)}$, $j = 1, \ \ldots, \ n$ (the Jacobi formula).
\end{enumerate}

\section{Stabilization by a diagonal matrix}
Here we will use the following definitions and notations (see, for example, \cite{GRU}).

{\bf Definition.} We say that an $n \times n$ matrix $\mathbf A$ has a {\it nested sequence of positive principal minors} or simply a {\it nest}, if there is a permutation $(i_1, \ \ldots, \ i_n)$ of the set of indices $[n]$ such that
$$A\left(\begin{array}{ccc}i_1 & \ldots  & i_j \\ i_1 & \ldots & i_j \end{array}\right) > 0 \qquad j = 1, \ \ldots, \ n.$$
Note that the indices $(i_1, \ \ldots, \ i_j)$ in each minor are ordered lexicographically.

The nest, defined by the natural ordering $\{1, \ \ldots, \ n\}$ is called the {\it leading nest}.

Later we'll use "a positive diagonal matrix" for a diagonal matrix with positive principal diagonal entries.
To describe the matrix stablization, let us give the following definitions.

{\bf Definition.} An $n \times n$ positive diagonal matrix ${\mathbf D}_{(\mathbf A)}$ is called a {\it stabilization matrix} for an $n \times n$ matrix $\mathbf A$ if all the eigenvalues of ${\mathbf D}_{(\mathbf A)}{\mathbf A}$ are positive and simple.

{\bf Definition.} An $n \times n$ real matrix $\mathbf A$ is called {\it stabilizable} if there is at least one stabilization matrix ${\mathbf D}_{(\mathbf A)}$.

The following sufficient conditions for the existence of a stabilization matrix were provided by Fisher and Fuller (see \cite{BAL}, p. 728, Theorem 1, also \cite{FIF}).
\begin{theorem}[Fisher, Fuller]\label{FiF} Let $\mathbf A$ be an $n \times n$ real matrix, all of whose leading principal minors are positive. Then there is an $n \times n$ positive diagonal matrix ${\mathbf D}_{({\mathbf A})}$, such that all of the eigenvalues of ${\mathbf D}_{({\mathbf A})}{\mathbf A}$ are positive and simple.
\end{theorem}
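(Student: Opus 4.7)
The plan is induction on $n$. The base case $n=1$ is immediate: if $\mathbf{A} = (a_{11})$ with $a_{11} > 0$, the scalar $1$ works. For the inductive step, assume the claim in dimension $n-1$, and let $\mathbf{A}$ be an $n \times n$ matrix with all leading principal minors positive. The leading $(n-1) \times (n-1)$ principal submatrix $\mathbf{A}'$ inherits this property, so by the inductive hypothesis there is a positive diagonal matrix $\mathbf{D}'$ of order $n-1$ such that $\mathbf{D}' \mathbf{A}'$ has $n-1$ positive simple eigenvalues $\mu_1 > \cdots > \mu_{n-1} > 0$.

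The idea is then to use the perturbation $\mathbf{D}(\epsilon) = \diag(\mathbf{D}', \epsilon)$ and let $\epsilon \searrow 0$. At $\epsilon = 0$ the matrix $\mathbf{D}(0)\mathbf{A}$ is block upper triangular with diagonal blocks $\mathbf{D}'\mathbf{A}'$ and $0$, so its spectrum is the pairwise distinct set $\{\mu_1, \ldots, \mu_{n-1}, 0\}$. Because these eigenvalues are all simple and distinct, the spectrum of $\mathbf{D}(\epsilon)\mathbf{A}$ depends analytically on $\epsilon$ near $0$: one obtains $n-1$ branches $\mu_i(\epsilon) \to \mu_i$ and a further branch $\nu(\epsilon) \to 0$.

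For small $\epsilon > 0$ the $\mu_i(\epsilon)$ are real, positive, and pairwise distinct by continuity. Complex eigenvalues of a real matrix occur in conjugate pairs, so $\nu(\epsilon)$ must also be real. Its sign follows from the determinantal identity $\det(\mathbf{D}(\epsilon)\mathbf{A}) = \epsilon \det(\mathbf{D}') \det(\mathbf{A}) > 0$, where positivity uses the $n$-th leading principal minor $\det(\mathbf{A}) > 0$; since $\prod_i \mu_i(\epsilon) \to \prod_i \mu_i > 0$, the remaining factor $\nu(\epsilon)$ must be positive. For $\epsilon$ small enough, $\nu(\epsilon)$ is moreover distinct from every $\mu_i(\epsilon)$ (it is close to $0$, while the $\mu_i(\epsilon)$ are bounded away from $0$). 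Setting $\mathbf{D}_{(\mathbf{A})} := \mathbf{D}(\epsilon)$ for any such $\epsilon$ then completes the induction.

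The main technical point is justifying the analytic (or just continuous) dependence of the individual eigenvalues on $\epsilon$ and exploiting it to pin down the sign of the small eigenvalue $\nu(\epsilon)$. Both rely on the distinctness of the unperturbed spectrum $\{\mu_1, \ldots, \mu_{n-1}, 0\}$, which is precisely what the simplicity clause in the inductive hypothesis provides — so the induction must carry the simplicity of the eigenvalues along, not merely their positivity. This also explains why the paper plans to revisit the argument: having the $\mu_i$'s simple and strictly ordered gives enough freedom to arrange further properties of $\mathbf{D}_{(\mathbf{A})}$ (for instance, entries decreasing sufficiently fast) that will be needed later in the proof of Theorem~\ref{maintheorem}.
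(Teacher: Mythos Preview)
Your proof is correct and follows essentially the same inductive perturbation argument that the paper uses (the paper states Theorem~\ref{FiF} as a cited result without a standalone proof, but reproduces exactly this argument inside the proof of Lemma~\ref{diag}): apply the hypothesis to the leading $(n-1)\times(n-1)$ block, set $\mathbf{D}=\diag(\mathbf{D}',\epsilon)$, observe that at $\epsilon=0$ the spectrum is $\{\mu_1,\dots,\mu_{n-1},0\}$ with simple roots, and use continuity of roots together with $\det(\mathbf{D}(\epsilon)\mathbf{A})>0$ to force the small branch $\nu(\epsilon)$ to be real and positive. Your closing remark about why simplicity must be carried through the induction, and why this leaves room to impose further constraints on $\mathbf{D}_{(\mathbf{A})}$, is exactly the point the paper exploits in Lemma~\ref{diag}.
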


An obvious consiquence of the Fisher--Fuller theorem is the following statement.

\begin{corollary}\label{Fifc}
An $n \times n$ real matrix $\mathbf A$ is stabilizable if it has at least one nested sequence of positive principal minors.
\end{corollary}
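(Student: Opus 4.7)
The plan is to reduce Corollary \ref{Fifc} to the Fisher--Fuller theorem (Theorem \ref{FiF}) by a simultaneous row/column permutation that turns an arbitrary nest into the leading nest. Let $(i_1,\ldots,i_n)$ be a permutation of $[n]$ producing the given nested sequence of positive principal minors of $\mathbf A$, and let $\sigma$ be the permutation with $\sigma(k)=i_k$, $k=1,\ldots,n$. Denote by $\mathbf P$ the corresponding permutation matrix, so that $\mathbf P^T\mathbf A\mathbf P$ is obtained from $\mathbf A$ by permuting rows and columns via $\sigma$.

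First I would observe that for each $k$, the leading $k\times k$ principal submatrix of $\mathbf P^T\mathbf A\mathbf P$ is the submatrix of $\mathbf A$ supported on the row/column index set $\{\sigma(1),\ldots,\sigma(k)\}=\{i_1,\ldots,i_k\}$, in the order dictated by $\sigma$. Since simultaneous row and column permutation contributes $\operatorname{sgn}(\tau)^2=1$ to the determinant, its determinant equals the lexicographically ordered principal minor $A\left(\begin{smallmatrix}i_1&\ldots&i_k\\i_1&\ldots&i_k\end{smallmatrix}\right)$, which is positive by hypothesis. Thus $\mathbf P^T\mathbf A\mathbf P$ has all leading principal minors positive.

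Now I would apply Theorem \ref{FiF} to $\mathbf P^T\mathbf A\mathbf P$ to obtain a positive diagonal matrix $\mathbf D'$ such that $\mathbf D'\mathbf P^T\mathbf A\mathbf P$ has only positive and simple eigenvalues. Setting $\mathbf D_{(\mathbf A)}:=\mathbf P\mathbf D'\mathbf P^T$, one gets a positive diagonal matrix as well (its diagonal entries are merely a reordering of those of $\mathbf D'$). Finally, the similarity
$$\mathbf P^T\bigl(\mathbf D_{(\mathbf A)}\mathbf A\bigr)\mathbf P=\mathbf P^T\mathbf P\mathbf D'\mathbf P^T\mathbf A\mathbf P=\mathbf D'\mathbf P^T\mathbf A\mathbf P$$
shows that $\mathbf D_{(\mathbf A)}\mathbf A$ has the same spectrum as $\mathbf D'\mathbf P^T\mathbf A\mathbf P$, so all its eigenvalues are positive and simple, establishing that $\mathbf A$ is stabilizable.

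There is no real obstacle here; the only point requiring care is the identification of the leading principal minors of $\mathbf P^T\mathbf A\mathbf P$ with the given nested principal minors of $\mathbf A$, which rests on the fact that a principal minor depends only on the index set, not on the ordering used inside the row and column tuples.
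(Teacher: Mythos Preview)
Your argument is correct and is exactly the natural way to make precise what the paper calls ``an obvious consequence of the Fisher--Fuller theorem''; the paper does not spell out a proof, and your permutation-conjugation reduction to the leading-nest case is the intended one-line idea.
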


Let us prove the following lemma which describes the possible choice of a stabilization matrix.

\begin{lemma}\label{diag}
Let $\mathbf A$ be an $n \times n$ matrix with positive leading principal minors. Then it is stabilizable and the following statements hold:
\begin{enumerate}
\item[\rm 1.] We can choose the stabilization matrix ${\mathbf D}_{({\mathbf A})}$ in the following form
    $${\mathbf D}_{({\mathbf A})} = \diag\{\epsilon_1, \ \epsilon_2, \ \ldots, \ \epsilon_{n}\},$$
    where $1 = \epsilon_1 > \epsilon_2 > \ldots > \epsilon_n > 0.$
\item[\rm 2.] There is a stabilization matrix $${\mathbf D}^0_{({\mathbf A})} = \diag\{\epsilon^0_1, \ \epsilon^0_2, \ \ldots, \ \epsilon^0_{n}\},$$ such that any positive diagonal matrix ${\mathbf D}_{({\mathbf A})} = \diag\{\epsilon_1, \ \epsilon_2, \ \ldots, \ \epsilon_{n}\}$ satisfying $\epsilon_1 = \epsilon_1^0$ and
    $$\frac{\epsilon_i}{\epsilon_{i+1}} \geq \frac{\epsilon_i^0}{\epsilon_{i+1}^0} \qquad i = 1, \ \ldots, \ n-1. $$ is also a stabilization matrix for $\mathbf A$.
\end{enumerate}
\end{lemma}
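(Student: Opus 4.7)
My plan is to redo the proof of Theorem \ref{FiF} in the form given in \cite{BAL}, tracking the quantitative dependence of the argument on the successive ratios $s_l := \epsilon_{l+1}/\epsilon_l$; the existence of \emph{some} stabilization matrix is already Corollary \ref{Fifc}, so what the lemma really adds is the ability to extract the extra structure in (1) and (2) from a single careful replay of that proof.

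The algebraic key observation is the expansion
\[c_k(\mathbf{D}) \;=\; \sum_{|S|=k}\epsilon_S\;A\!\left(\begin{array}{c} S \\ S \end{array}\right), \qquad \epsilon_S := \prod_{i\in S}\epsilon_i,\]
for the sum of the $k\times k$ principal minors of $\mathbf{D}\mathbf{A}$ (i.e.\ the $k$th coefficient, up to sign, of its characteristic polynomial). Writing $\epsilon_i = \epsilon_1\prod_{l<i}s_l$, the exponent of $s_l$ in the ratio $\epsilon_S/\epsilon_{\{1,\ldots,k\}}$ equals $|\{i\in S\colon i>l\}|-\max(k-l,0)$. A short combinatorial check shows that this exponent is always nonnegative, and is strictly positive for at least one $l$ whenever $|S|=k$ and $S\neq\{1,\ldots,k\}$. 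In words: shrinking any $s_l$ (with $\epsilon_1$ held fixed) strictly strengthens the dominance of the ``leading'' summand $\epsilon_{\{1,\ldots,k\}}A(1,\ldots,k\mid 1,\ldots,k)$---which is positive by hypothesis---over every other summand in $c_k(\mathbf{D})$.

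For (1) I would specialize to $\epsilon_i = t^{i-1}$ and apply Ballantine's induction on $n$. Having stabilized the $(n-1)\times(n-1)$ leading principal submatrix $\mathbf{A}_0$ with some $\mathbf{D}_0 = \diag(\epsilon_1,\ldots,\epsilon_{n-1})$ of the desired decreasing shape, adjoin $\epsilon_n > 0$. For $\epsilon_n/\epsilon_{n-1}$ small enough, the spectrum of $\mathbf{D}\mathbf{A}$ splits into an $O(\epsilon_n)$-perturbation of the positive simple spectrum of $\mathbf{D}_0\mathbf{A}_0$ plus one additional eigenvalue of order $\epsilon_n$; the latter is positive since $\det(\mathbf{D}\mathbf{A}) = \epsilon_1\cdots\epsilon_n\det\mathbf{A}>0$ and simple because it is separated from the others by an order of magnitude. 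This produces a stabilization matrix of the form required in (1) for every $t$ below some $\mathbf{A}$-dependent threshold $t^*$.

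For (2) I would take $\mathbf{D}^0_{(\mathbf{A})}$ to be any stabilization matrix obtained from the construction of (1), chosen with a safety margin so that every perturbation estimate in the induction holds strictly. For any $\mathbf{D}$ with $\epsilon_1 = \epsilon_1^0$ and $s_l \leq s_l^0$ for all $l$, the monotonicity of the second paragraph guarantees that every dominance estimate validating the induction for $\mathbf{D}^0$ is preserved (indeed strengthened) coordinatewise for $\mathbf{D}$, so the same inductive argument still produces positive simple spectrum. The main technical obstacle is bookkeeping: the induction must be phrased so that its stage-$l$ threshold depends on $s_1,\ldots,s_{l-1}$ only as an upper bound on $s_l$, not as a tangled joint condition on the whole tuple. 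This is essentially the way the perturbation step at stage $l$ naturally works, but it must be verified carefully so that coordinatewise shrinking of the ratios genuinely preserves positivity and simplicity of the spectrum of $\mathbf{D}\mathbf{A}$.
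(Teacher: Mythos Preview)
Your overall plan coincides with the paper's: both argue by induction on $n$, replaying Ballantine's perturbation proof of Fisher--Fuller by applying the hypothesis to the leading $(n-1)\times(n-1)$ block $\mathbf{A}_{11}$ and then adjoining a sufficiently small $\epsilon_n$. For part~1 this is exactly what the paper does (it does not specialize to $\epsilon_i=t^{i-1}$, but that is immaterial).

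For part~2 the paper takes a somewhat different route from the one you sketch. It does \emph{not} invoke any coefficient-level monotonicity; instead it makes part~2 itself the inductive hypothesis. Writing $\mathbf{D}=\diag(\mathbf{D}_1,\epsilon_n)$, the first $n-2$ ratio inequalities say that $\mathbf{D}_1$ satisfies the hypotheses of part~2 relative to $\mathbf{D}_1^0$, so by induction $\mathbf{D}_1$ already stabilizes $\mathbf{A}_{11}$; multiplying all $n-1$ ratio inequalities and using $\epsilon_1=\epsilon_1^0=1$ gives $\epsilon_n\le\epsilon_n^0$, and the paper then appeals to the stage-$n$ threshold chosen in the first half of the inductive step. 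Your combinatorial observation---that shrinking any $s_l$ only strengthens the dominance of the leading summand $\epsilon_{\{1,\ldots,k\}}A(1,\ldots,k\mid 1,\ldots,k)$ in $c_k(\mathbf{D})$---is correct, but it controls the \emph{coefficients} of the characteristic polynomial, not the location or separation of its roots. Positivity and simplicity of the spectrum are eigenvalue-level facts that do not follow from coefficient dominance alone, so this observation cannot by itself justify the claim that ``every dominance estimate validating the induction for $\mathbf{D}^0$ is preserved.'' The bookkeeping worry you flag (that the stage-$l$ threshold might depend jointly on all earlier $s_i$) is exactly the crux; the paper's resolution is the inductive packaging of part~2 together with the product-of-ratios inequality, not monotonicity of the $c_k$.
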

\begin{proof} For the proof, we repeat the reasoning of Proof 1 of the Fisher--Fuller theorem (see \cite{BAL}, p. 728-729). We use the induction on $n$. For $n = 1$, the result is trivial, ${\mathbf A} = \{a_{11}\}$ and ${\mathbf D}_{({\mathbf A})} = \{1\}$. Let us prove Lemma \ref{diag} for $n =2$. In this case, ${\mathbf A} = \left(\begin{array}{ccc} a_{11} & a_{12} \\ a_{21} & a_{22} \end{array}\right)$ and we define ${\mathbf D}_{({\mathbf A})} = \left(\begin{array}{ccc} 1 & 0 \\ 0 & \epsilon_2 \end{array}\right)$, where $\epsilon_2 > 0$ will be chosen later. Then ${\mathbf D}_{({\mathbf A})}{\mathbf A} = \left(\begin{array}{ccc} a_{11} & a_{12} \\ \epsilon_2 a_{21} & \epsilon_2 a_{22} \end{array}\right)$. Let $M_2(\epsilon_2, \lambda) := \det({\mathbf D}_{({\mathbf A})}{\mathbf A} - \lambda I)$, which is dependent on $\epsilon_2$.
Then $M_2(0, \lambda)$ has two simple roots: $\lambda_1 = a_{11}$ and $\lambda_2 = 0$. The roots of $M_2(\epsilon_2, \lambda)$ are close to $a_{11}$ and $0$, for sufficiently small values of $\epsilon_2$. That means, both of them are real (since complex eigenvalues must appear in conjugate pairs), and at least one of them is positive.
Since $\det({\mathbf D}_{({\mathbf A})}{\mathbf A})$ is positive, the other eigenvalue is also positive. The continuity of eigenvalues implies, that there is a positive integer $\epsilon_2^0 < 1$ such that the above spectral properties hold for all $\epsilon_2,$
$0 < \epsilon_2 \leq \epsilon_2^0$. So we put ${\mathbf D}_{({\mathbf A})}^0:= \diag\{1, \ \epsilon_2^0\}$. The above reasoning shows that any positive diagonal matrix ${\mathbf D}_{({\mathbf A})}:= \diag\{1, \ \epsilon_2\}$ with $0 < \epsilon_2 \leq \epsilon_2^0$ is also a stabilization matrix for ${\mathbf A}$.

Note that even in the case $n=2$, if ${\mathbf D}_{({\mathbf A})} = \{1, \ \epsilon_2\}$ is an arbitrary stabilization matrix, not every matrix of the form $\widetilde{{\mathbf D}}_{({\mathbf A})} = \diag\{1, \widetilde{\epsilon}_2\}$, where $0 < \widetilde{\epsilon}_2 < \epsilon_2$ will be also a stabilization matrix. This is true only for ${\mathbf D}_{({\mathbf A})}^0$.

For $n=2$, the Lemma is proven. Assume the Lemma holds for $n-1$. Let us prove it for $n$. Let ${\mathbf A}$ be an $n \times n$ matrix and let us show that $\epsilon_{n}$ can be chosen to satisfy the inequalities $ 0 < \epsilon_{n} < \epsilon_{n-1}$.
Repeating the proof of the Fisher--Fuller theorem, we apply the induction hypothesis to the $(n-1) \times (n-1)$ leading principal submatrix ${\mathbf A}_{11}$, obtained from $\mathbf A$ by deleting the last row and the last column. We use the following partitions:
$${\mathbf A} = \left(\begin{array}{ccc} {\mathbf A}_{11} & {\mathbf A}_{12} \\ {\mathbf A}_{21} & {\mathbf A}_{22} \end{array}\right),$$
and
$${\mathbf D}_{({\mathbf A})} = \left(\begin{array}{ccc} {\mathbf D}_1 & 0 \\ 0 & \epsilon_{n} \end{array}\right),$$
where ${\mathbf D}_1 = {\rm diag} \{\epsilon_1, \ \epsilon_2, \ \ldots, \ \epsilon_{n-1}\}$, with $1 = \epsilon_1 > \epsilon_2 > \ldots > \epsilon_{n-1} > 0$ and $\epsilon_n$ will be chosen later.

We also represent the matrix ${\mathbf D}^0_{({\mathbf A})}$ as
$${\mathbf D}^0_{({\mathbf A})} = \left(\begin{array}{ccc} {\mathbf D}^0_1 & 0 \\ 0 & \epsilon^0_{n} \end{array}\right),$$
where the matrix ${\mathbf D}^0_1$ is obtained by applying the induction hypothesis to the matrix ${\mathbf A}_{11}$ and $\epsilon^0_{n}$ will be chosen later.

Repeating the above reasoning, we put $M_{n}(\epsilon_n, \lambda) := \det({\mathbf D}_{({\mathbf A})}{\mathbf A} - \lambda I)$. Nonzero roots of $M_{n}(0, \lambda)$ are the same that of $\det({\mathbf D}_1{\mathbf A}_{11} - \lambda I)$, besides, it has a simple root in $0$. Considering sufficiently small $\epsilon_{n}$ and taking into account that $\det({\mathbf D}_{({\mathbf A})}{\mathbf A}) > 0$, we obtain that all the roots of $M_{n}(\epsilon_{n}, \lambda)$ are positive and simple. Using the continuity of eigenvalues, we get that there is a positive integer $\epsilon_n^0$, $0 < \epsilon_{n}^0 < \epsilon_{n-1}^0$ such that the above spectral properties hold for all $\epsilon_n$ which satisfy $0 < \epsilon_{n} \leq \epsilon_{n}^0$. Thus we can choose $\epsilon_n$ satisfying $0 < \epsilon_{n} < \min(\epsilon_n^0, \epsilon_{n-1})$. So we have found the stabilization matrix ${\mathbf D}_{({\mathbf A})}$ in the form ${\mathbf D}_{({\mathbf A})} = \diag\{\epsilon_1, \ \epsilon_2, \ \ldots, \ \epsilon_n\}$, where $1 = \epsilon_1 > \epsilon_2 > \ldots > \epsilon_n > 0$.

Now we put
$${\mathbf D}^0_{({\mathbf A})} = \left(\begin{array}{ccc}{\mathbf D}^0_1 & 0 \\ 0 & \epsilon^0_{n} \end{array}\right),$$
where $\epsilon_n^0$ is as above. Let us prove that any positive diagonal matrix   ${\mathbf D}_{({\mathbf A})} = \diag\{\epsilon_1, \ \epsilon_2, \ \ldots, \ \epsilon_{n}\}$ satisfying $\epsilon_1 = 1$ and
    $$\frac{\epsilon_i}{\epsilon_{i+1}} \geq \frac{\epsilon_i^0}{\epsilon_{i+1}^0} \qquad i = 1, \ \ldots, \ n-1. $$ is also a stabilization matrix for $\mathbf A$.
Let us write ${\mathbf D}_{({\mathbf A})}$ in the partition form:
$${\mathbf D}_{({\mathbf A})} = \left(\begin{array}{ccc}{\mathbf D}_1 & 0 \\ 0 & \epsilon_{n} \end{array}\right).$$
By the induction hypothesis, ${\mathbf D}_1$ stabilizes ${\mathbf A}_{11}$. Then the previous part of the proof implies that it is enough to show that the last entry $\epsilon_n$ satisfies the inequality $\epsilon_n \leq \epsilon_n^0$. Indeed, multiplying the inequalities
$$\frac{\epsilon_i}{\epsilon_{i+1}} \geq \frac{\epsilon_i^0}{\epsilon_{i+1}^0} \qquad i = 1, \ \ldots, \ n-1. $$
we obtain that $\frac{\epsilon_1}{\epsilon_n} \geq \frac{\epsilon_1^0}{\epsilon_n^0}$ and taking into account that $\epsilon_1 = \epsilon_1^0 = 1$, we obtain that $0 < \epsilon_n \leq \epsilon_n^0$. Thus ${\mathbf D}_{({\mathbf A})}$ is also a stabilization matrix for ${\mathbf A}$. \end{proof}

\section{Spectra of $P^2$-matrices}

The following properties of $P$-matrices are well-known (see, for example, \cite{TSA}, Theorem 3.1).

\begin{lemma}\label{P} Let $\mathbf A$ be a $P$-matrix. Then the following matrices are also $P$-matrices.
\begin{enumerate}
\item[\rm 1.] ${\mathbf A}^T$ (the transpose of $\mathbf A$);
\item[\rm 2.] ${\mathbf A}^{-1}$ (the inverse of $\mathbf A$);
\item[\rm 3.] ${\mathbf D}{\mathbf A}{\mathbf D}^{-1}$, where $\mathbf D$ is an invertible diagonal matrix;
\item[\rm 4.] ${\mathbf P}{\mathbf A}{\mathbf P}^{-1}$, where $\mathbf P$ is a permutation matrix;
\item[\rm 5.] any principal submatrix of $\mathbf A$;
\item[\rm 6.] the Schur complement of any principal submatrix of $\mathbf A$.
\item[\rm 7.] both ${\mathbf D}{\mathbf A}$ and ${\mathbf A}{\mathbf D}$, where $\mathbf D$ is a positive diagonal matrix.
\end{enumerate}
\end{lemma}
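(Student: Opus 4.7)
The plan is to verify each of the seven items by writing down a generic principal minor of the transformed matrix and relating it to principal minors of $\mathbf{A}$. Items (1), (3), (4), (5), and (7) reduce to one-line computations that preserve or scale a principal minor by a positive factor; item (2) is essentially the Jacobi identity relating complementary principal minors of $\mathbf{A}$ and $\mathbf{A}^{-1}$; and item (6), the Schur complement, is where the only real content lies.

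For the easy cases, I fix an index set $\alpha \subseteq [n]$ and argue as follows. Since $(\mathbf{A}^T)[\alpha]$ is the transpose of $\mathbf{A}[\alpha]$ and transposition preserves the determinant, (1) is immediate. The similarity $(\mathbf{DAD}^{-1})[\alpha] = \mathbf{D}[\alpha]\mathbf{A}[\alpha]\mathbf{D}[\alpha]^{-1}$ has the same determinant as $\mathbf{A}[\alpha]$, giving (3). The permutation similarity in (4) simply relabels row/column indices by the same permutation, so each principal minor equals a principal minor of $\mathbf{A}$ on a permuted index set. Item (5) is transitivity of ``being a principal submatrix.'' Item (7) follows from $\det((\mathbf{DA})[\alpha]) = \det(\mathbf{D}[\alpha])\det(\mathbf{A}[\alpha])$, a product of positives because $\mathbf{D}$ is a positive diagonal matrix. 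Item (2) follows from Jacobi's identity, which can be read off the compound-matrix formula $(\mathbf{A}^{(j)})^{-1} = (\mathbf{A}^{-1})^{(j)}$ recalled in Section~2: the principal minor of $\mathbf{A}^{-1}$ on $\alpha$ equals $\det(\mathbf{A}[\alpha^c])/\det(\mathbf{A})$, a ratio of positive principal minors of $\mathbf{A}$.

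The main obstacle is item (6), and the key point is to show that restriction to a principal submatrix commutes with taking the Schur complement. Concretely, if $\alpha \subseteq [n]$ is fixed and $\beta \subseteq \alpha^c$ is arbitrary, I would show that the principal submatrix $(\mathbf{A}/\mathbf{A}[\alpha])[\beta]$ coincides with the Schur complement of $\mathbf{A}[\alpha]$ inside the enlarged principal submatrix $\mathbf{A}[\alpha \cup \beta]$. This is a direct block-matrix computation starting from
$$\mathbf{A}/\mathbf{A}[\alpha] = \mathbf{A}[\alpha^c,\alpha^c] - \mathbf{A}[\alpha^c,\alpha]\mathbf{A}[\alpha]^{-1}\mathbf{A}[\alpha,\alpha^c],$$
and restricting rows and columns to $\beta$: since $\beta \subseteq \alpha^c$, the restriction passes cleanly through each of the three factors. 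Granting this identity, the Schur determinant formula yields
$$\det\bigl((\mathbf{A}/\mathbf{A}[\alpha])[\beta]\bigr) = \frac{\det(\mathbf{A}[\alpha \cup \beta])}{\det(\mathbf{A}[\alpha])},$$
and both numerator and denominator are positive since $\mathbf{A}$ is a $P$-matrix, so the left-hand side is positive. As $\beta$ ranges over all subsets of $\alpha^c$, this exhausts the principal minors of the Schur complement, completing the proof.
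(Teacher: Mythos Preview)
Your proof is correct and complete; each of the seven verifications is sound, and your treatment of item (6) via the identity $(\mathbf{A}/\mathbf{A}[\alpha])[\beta] = \mathbf{A}[\alpha\cup\beta]/\mathbf{A}[\alpha]$ together with the Schur determinant formula is the standard and cleanest route. Note, however, that the paper does not actually prove this lemma: it is stated as a well-known collection of facts with a citation to \cite{TSA}, Theorem~3.1, and no argument is given. So there is nothing to compare against---you have simply supplied a proof where the paper chose to defer to the literature.
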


Let us list the following properties of $P^2$- and $Q^2$- matrices, which will be used later.

\begin{lemma}\label{P^2}
Let $\mathbf A$ be a $P^2$- ($Q^2$-) matrix. Then the following matrices are also $P^2$- ($Q^2$-) matrices.
\begin{enumerate}
\item[\rm 1.] ${\mathbf A}^T$ (the transpose of $\mathbf A$);
\item[\rm 2.] ${\mathbf A}^{-1}$ (the inverse of $\mathbf A$);
\item[\rm 3.] ${\mathbf D}{\mathbf A}{\mathbf D}^{-1}$, where $\mathbf D$ is an invertible diagonal matrix;
\item[\rm 4.] ${\mathbf P}{\mathbf A}{\mathbf P}^{-1}$, where $\mathbf P$ is a permutation matrix.
\end{enumerate}
\end{lemma}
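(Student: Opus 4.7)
The plan is to exploit the fact that squaring commutes with each of the four operations in question. Specifically,
$$(\mathbf{A}^T)^2=(\mathbf{A}^2)^T,\quad (\mathbf{A}^{-1})^2=(\mathbf{A}^2)^{-1},\quad (\mathbf{D}\mathbf{A}\mathbf{D}^{-1})^2=\mathbf{D}\mathbf{A}^2\mathbf{D}^{-1},\quad (\mathbf{P}\mathbf{A}\mathbf{P}^{-1})^2=\mathbf{P}\mathbf{A}^2\mathbf{P}^{-1}.$$
So for every operation $f$ in the list we have $f(\mathbf{A})^2=f(\mathbf{A}^2)$, which reduces the problem to showing that the corresponding operation preserves the $P$-matrix (resp.\ $Q$-matrix) property.

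For the $P^2$-case the proof is immediate: if $\mathbf A$ is a $P^2$-matrix then both $\mathbf A$ and $\mathbf{A}^2$ are $P$-matrices, and applying the corresponding part of Lemma \ref{P} to each of them shows that $f(\mathbf{A})$ and $f(\mathbf{A})^2=f(\mathbf{A}^2)$ are both $P$-matrices, so $f(\mathbf{A})$ is a $P^2$-matrix.

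For the $Q^2$-case I first need the $Q$-matrix analogues of parts 1--4 of Lemma \ref{P}. The transpose, permutation-similarity and diagonal-similarity cases are routine: transposition leaves each principal minor fixed, permutation similarity permutes the principal minors of a given order among themselves, and every principal submatrix of $\mathbf{D}\mathbf{A}\mathbf{D}^{-1}$ has the form $\mathbf{D}_\alpha\mathbf{A}_\alpha\mathbf{D}_\alpha^{-1}$ and therefore the same determinant as $\mathbf{A}_\alpha$. In each case the sum of principal minors of order $k$ is unchanged, so the $Q$-matrix property is preserved.

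The one case that calls for a calculation, and which I expect to be the main obstacle, is the inverse. Here I would invoke the Jacobi identity for principal minors,
$$A^{-1}\!\left(\begin{array}{ccc} i_1 & \ldots & i_k \\ i_1 & \ldots & i_k \end{array}\right)=\frac{A\!\left(\begin{array}{ccc} j_1 & \ldots & j_{n-k} \\ j_1 & \ldots & j_{n-k}\end{array}\right)}{\det(\mathbf{A})},$$
where $(j_1,\ldots,j_{n-k})$ is the complement of $(i_1,\ldots,i_k)$ in $[n]$, and sum over all $k$-subsets to obtain
$$\sum_{(i_1,\ldots,i_k)}A^{-1}\!\left(\begin{array}{ccc} i_1 & \ldots & i_k \\ i_1 & \ldots & i_k \end{array}\right)=\frac{1}{\det(\mathbf{A})}\sum_{(j_1,\ldots,j_{n-k})}A\!\left(\begin{array}{ccc} j_1 & \ldots & j_{n-k} \\ j_1 & \ldots & j_{n-k}\end{array}\right).$$
Since $\mathbf{A}$ is a $Q$-matrix, the right-hand sum and $\det(\mathbf A)$ (the $k=n$ case of the $Q$-matrix definition) are both positive, so the left-hand side is positive, and $\mathbf{A}^{-1}$ is a $Q$-matrix. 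Combining these four $Q$-matrix closure properties with the squaring identities above then yields the $Q^2$-case exactly as in the $P^2$-case: if $\mathbf{A}$ is a $Q^2$-matrix then $\mathbf{A}$ and $\mathbf{A}^2$ are $Q$-matrices, hence so are $f(\mathbf{A})$ and $f(\mathbf{A})^2=f(\mathbf{A}^2)$.
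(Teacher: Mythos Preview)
Your proof is correct and follows essentially the same route as the paper: both reduce the lemma to the identities $f(\mathbf{A})^2=f(\mathbf{A}^2)$ for each of the four operations and then invoke the closure of the $P$- (resp.\ $Q$-) matrix class under $f$. The only difference is that the paper dismisses the $Q$-matrix closure properties (including the inverse case) as an ``obvious fact,'' whereas you spell them out explicitly via the Jacobi identity and the invariance of principal-minor sums under transpose, diagonal similarity, and permutation similarity.
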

\begin{proof} (1) Since $\mathbf A$ is a $P$-matrix, ${\mathbf A}^T$ is also a $P$-matrix, by Lemma \ref{P}, Part 1. Since ${\mathbf A}^2$ is a $P$-matrix and $({\mathbf A}^2)^T = ({\mathbf A}^T)^2$, we obtain that $({\mathbf A}^T)^2$ is also a $P$-matrix. So we conclude that ${\mathbf A}^T$ is a $P^2$-matrix.

(2) It is enough for the proof just to check that $({\mathbf A}^{-1})^2 = ({\mathbf A}^2)^{-1}$ and to apply Lemma \ref{P}, Part 2 to both $\mathbf A$ and ${\mathbf A}^2$.

(3) The proof follows from the equality $({\mathbf D}{\mathbf A}{\mathbf D}^{-1})^2 = {\mathbf D}({\mathbf A})^2{\mathbf D}^{-1}$ and Part 3 of Lemma \ref{P}.

(4) The proof follows from the equality $({\mathbf P}{\mathbf A}{\mathbf P}^{-1})^2 = {\mathbf P}({\mathbf A})^2{\mathbf P}^{-1}$ and Part 4 of Lemma \ref{P}.

The case of a $Q^2$-matrix is considered analogically by using the obvious fact that if $\mathbf A$ is a $Q$-matrix then ${\mathbf A}^T$, ${\mathbf A}^{-1}$, ${\mathbf D}{\mathbf A}{\mathbf D}^{-1}$, ${\mathbf P}{\mathbf A}{\mathbf P}^{-1}$ are also $Q$-matrices.
\end{proof}

Note, that principal submatrices and Schur complements of a $P^2$-matrix may not be $P^2$-matrices. Unlike the case of $P$-matrices, the product of the form ${\mathbf D}{\mathbf A}$, where $\mathbf A$ is a $P^2$-matrix, $\mathbf D$ is an arbitrary positive diagonal matrix, also may not be a $P^2$-matrix and even a $Q^2$-matrix.

The following lemma shows the link between the properties of the compound matrices of diagonal scalings and generalized compound matrices.

\begin{lemma}\label{tech}
Let an $n \times n$ $P$-matrix $\mathbf A$ be also a $Q^2$-matrix. Let $\mathbf D$ be a positive diagonal matrix such that the following inequalities hold:
$${\rm Tr}({\mathbf D}_k^{(j)}{\mathbf A}^{(j)}{\mathbf D}_m^{(j)}{\mathbf A}^{(j)}) > 0$$ for every $j = 1, \ \ldots, \ n$ and every pair $(k,m)$, $1 \leq k,m \leq j$.

Then ${\mathbf D}_t{\mathbf A} = (t{\mathbf I} + (1-t){\mathbf D}){\mathbf A}$ is also a $P$-matrix and a $Q^2$-matrix for all $t \in [0,1]$.
\end{lemma}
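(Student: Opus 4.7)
My plan has two parts. The first is immediate: since $\mathbf{D}_t = t\mathbf{I}+(1-t)\mathbf{D}$ is a convex combination of positive diagonal matrices and hence itself a positive diagonal matrix, Part~7 of Lemma~\ref{P} applied to the $P$-matrix $\mathbf{A}$ shows that $\mathbf{D}_t\mathbf{A}$ is a $P$-matrix for every $t\in[0,1]$.

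The substance is to show that $(\mathbf{D}_t\mathbf{A})^2$ is a $Q$-matrix, i.e.\ that ${\rm Tr}(((\mathbf{D}_t\mathbf{A})^2)^{(j)})>0$ for each $j=1,\ldots,n$. By Cauchy--Binet this trace equals ${\rm Tr}((\mathbf{D}_t^{(j)}\mathbf{A}^{(j)})^2)$. Because $\mathbf{D}_t$ is diagonal with entries $t+(1-t)d_i$, the generalized compound $\mathbf{D}_t^{(j)}$ is itself diagonal with $\alpha$-th entry $\prod_{i\in\alpha}(t+(1-t)d_i)$; expanding this product and collecting terms by the power of $(1-t)$, and comparing with the formula for $d^m_{\alpha\alpha}$ displayed after equation~(\ref{DiagForm}), yields
\[
\mathbf{D}_t^{(j)} \;=\; \sum_{k=0}^{j} t^{j-k}(1-t)^{k}\, \mathbf{D}_k^{(j)},
\]
under the natural convention $\mathbf{D}_0^{(j)}=\mathbf{I}$. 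Substituting into the trace produces the double-sum expansion
\[
{\rm Tr}((\mathbf{D}_t^{(j)}\mathbf{A}^{(j)})^2) \;=\; \sum_{k,m=0}^{j} t^{2j-k-m}(1-t)^{k+m}\, {\rm Tr}\bigl(\mathbf{D}_k^{(j)}\mathbf{A}^{(j)}\mathbf{D}_m^{(j)}\mathbf{A}^{(j)}\bigr),
\]
a polynomial in $t$ whose coefficients are nonnegative on $[0,1]$.

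It remains to analyze the signs of the traces on the right. The stated hypothesis gives strict positivity of every summand with $1\leq k,m\leq j$, and the corner $(k,m)=(0,0)$ summand equals ${\rm Tr}((\mathbf{A}^2)^{(j)})$, which is positive because $\mathbf{A}$ is a $Q^2$-matrix. The genuinely delicate part, and the main obstacle I anticipate, is the treatment of the mixed corner terms in which exactly one of $k,m$ equals zero: by cyclicity of the trace these reduce to ${\rm Tr}(\mathbf{D}_m^{(j)}(\mathbf{A}^2)^{(j)})$, whose sign is not directly controlled by the assumptions. I would handle them either by a Cauchy--Schwarz-type estimate bounding these cross terms by the positive diagonal traces $T_{k,k}$ and $T_{m,m}$ already supplied by the hypothesis, or by reading the stated index range together with $\mathbf{D}_0^{(j)}=\mathbf{I}$ as implicitly covering the cases $k=0$ or $m=0$. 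Once these cross terms are absorbed, strict positivity of the polynomial at every $t\in[0,1]$ follows from the diagonal summands (dominated at $t=1$ by the $(0,0)$-term and at $t=0$ by the $(j,j)$-term), delivering the $Q^2$-property and completing the proof.
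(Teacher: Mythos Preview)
Your approach is essentially the paper's: handle the $P$-matrix part via Lemma~\ref{P}, reduce via Cauchy--Binet to ${\rm Tr}((\mathbf{D}_t^{(j)}\mathbf{A}^{(j)})^2)$, expand $(\mathbf{D}_t)^{(j)}$ as a polynomial in $t,(1-t)$ with coefficients $\mathbf{D}_k^{(j)}$, and conclude from positivity of every summand. Two remarks. First, the paper obtains the expansion from the exterior-product binomial formula and writes it with extra factors $\binom{j}{k}$; your version, read off from the formula for $d^m_{\alpha\alpha}$ after~(\ref{DiagForm}), omits them. Either way the coefficients are positive, so the argument is unaffected (the discrepancy reflects an internal normalization inconsistency in the paper, not an error on your part). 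Second, regarding the cross terms with $k=0$ or $m=0$: the paper does not treat them separately at all---it simply asserts ``each term of Sum~(\ref{Sum}) is positive'' by the conditions of the Lemma, i.e.\ it tacitly adopts your second option and reads the hypothesis, together with $\mathbf{D}_0^{(j)}=\mathbf{I}^{(j)}$, as covering $0\le k,m\le j$. Your Cauchy--Schwarz alternative would not obviously work (the bilinear form $(X,Y)\mapsto{\rm Tr}(XY)$ is not positive definite on general matrices), so the intended resolution is the index-range reading; you were right to flag it, and this is precisely what the paper silently assumes.
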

\begin{proof} To prove that ${\mathbf D}_t{\mathbf A}$ is a $P$-matrix, it is enough to observe that ${\mathbf D}_t$ is a positive diagonal matrix for all $t \in [0,1]$ and then apply Lemma \ref{P}, Part 7. Now let us prove that $({\mathbf D}_t{\mathbf A})^2$ is a $Q$-matrix, or, equivalently, that ${\rm Tr}((({\mathbf D}_t{\mathbf A})^2)^{(j)}) > 0$ for all $j = 1, \ \ldots, \ n$ and all $t \in [0,1]$.

Applying the Cauchy--Binet formula, we obtain that
$$(({\mathbf D}_t{\mathbf A})^2)^{(j)} = (({\mathbf D}_t{\mathbf A})^{(j)})^2,$$ for all $j = 1, \ \ldots, \ n$ and all $t \in [0,1]$.

Using the Cauchy--Binet formula again, we expand $({\mathbf D}_t{\mathbf A})^{(j)}$ as follows:
$$({\mathbf D}_t{\mathbf A})^{(j)} = ({\mathbf D}_t)^{(j)}{\mathbf A}^{(j)} = (t{\mathbf I} + (1-t){\mathbf D})^{(j)}{\mathbf A}^{(j)}.$$

Now, using the properties of the exterior products and the binomial formula, we obtain
$$(t{\mathbf I} + (1-t){\mathbf D})^{(j)} = \sum_{k = 0}^j{j \choose k}t^{j-k}(1-t)^k\underbrace{{\mathbf I} \wedge \ldots \wedge {\mathbf I}}_{j-k}\wedge \underbrace{{\mathbf D} \wedge \ldots \wedge {\mathbf D}}_k =$$
$$ = \sum_{k = 0}^j{j \choose k}t^{j-k}(1-t)^k {\mathbf D}_k^{(j)},$$
where ${\mathbf D}_0^{(j)} = {\mathbf I}^{(j)}$.
Thus we have
$$((t{\mathbf I} + (1-t){\mathbf D})^{(j)}{\mathbf A}^{(j)})^2 = $$ $$ = \left(\sum_{k = 0}^j{j \choose k}t^{j-k}(1-t)^k {\mathbf D}_k^{(j)}{\mathbf A}^{(j)}\right)\left(\sum_{m = 0}^j{j \choose m}t^{j-m}(1-t)^m {\mathbf D}_m^{(j)}{\mathbf A}^{(j)}\right) =$$
$$ = \sum_{k,m = 0}^j{j \choose k}{j \choose m}t^{2j-(k+m)}(1-t)^{k+m} {\mathbf D}_k^{(j)}{\mathbf A}^{(j)}{\mathbf D}_m^{(j)}{\mathbf A}^{(j)}.$$

Since the trace function is linear, we obtain
$${\rm Tr}(((t{\mathbf I} + (1-t){\mathbf D})^{(j)}{\mathbf A})^{(j)})^2) = $$
\begin{equation}\label{Sum}
 \sum_{k,m = 0}^j{j \choose k}{j \choose m}t^{2j-(k+m)}(1-t)^{k+m} {\rm Tr}({\mathbf D}_k^{(j)}{\mathbf A}^{(j)}{\mathbf D}_m^{(j)}{\mathbf A}^{(j)}).
\end{equation}
According to the conditions of the Lemma, each term of Sum \ref{Sum} is positive. Thus the whole sum is positive.
 \end{proof}

Let us observe the following notations and basic facts.
Let ${\mathbf A} = \{a_{ij}\}_{i,j = 1}^n$ be an $n \times n$ matrix, then $|{\mathbf A}|$ denotes a matrix which entries are equal to the absolute values of $a_{ij}$ $(i, j = 1, \ \ldots, \ n)$, i.e. $|{\mathbf A}| = \{|a_{ij}|\}_{i,j = 1}^n$.

In this case, the triangle inequality shows that $|{\mathbf A}{\mathbf B}| \leq |{\mathbf A}||{\mathbf B}|$ entry-wise for any $n \times n$ matrices $\mathbf A$ and $\mathbf B$.

Let ${\mathbf D} = \diag\{d_{11}, \ \ldots, \ d_{nn}\}$ be a positive diagonal matrix which satisfies the estimate ${\mathbf D} < \alpha{\mathbf I}$ for some positive value $\alpha$ (in this notation, we mean that $d_{ii} < \alpha$ for all $i = 1, \ \ldots, \ n$). Then
\begin{equation}\label{oTrace}
\left|{\rm Tr}({\mathbf D}{\mathbf A})\right| = \left|\sum_{i=1}^n d_{ii}a_{ii}\right| \leq \sum_{i=1}^n d_{ii}|a_{ii}| < \alpha \left(\sum_{i=1}^n|a_{ii}|\right) = \alpha{\rm Tr}|{\mathbf A}|,
\end{equation}
for any $n \times n$ matrix $\mathbf A$.

Let ${\mathbf D}_1$, ${\mathbf D}_2$ be positive diagonal matrices which satisfy the estimates ${\mathbf D}_1 < \alpha{\mathbf I}$, ${\mathbf D}_2 < \beta{\mathbf I}$, respectively, for some positive values $\alpha$ and $\beta$. Then:
\begin{equation}\label{ooTrace}
\left|{\rm Tr}({\mathbf D}_1{\mathbf A}{\mathbf D}_2{\mathbf A})\right| < \alpha{\rm Tr}(|{\mathbf A}{\mathbf D}_2{\mathbf A}|) \leq \alpha{\rm Tr}(|{\mathbf A}||{\mathbf D}_2||{\mathbf A}|) = \end{equation}
$$ \alpha{\rm Tr}({\mathbf D}_2|{\mathbf A}|^2) < \alpha\beta{\rm Tr}(|{\mathbf A}|^2),$$
for any $n \times n$ matrix $\mathbf A$.

Given an $n \times n$ matrix $\mathbf A$, we denote ${\mathbf A}^{(j)}[1, \ \ldots, \ m]$, where $j = 1, \ \ldots, \ n$ and $m = 1, \ \ldots, \ j$, a principal submatrix of the $j$th compound matrix ${\mathbf A}^{(j)}$ which consists of all the minors of the $j$th order of the following form
$$ A\left(\begin{array}{cccccc} 1 & \ldots & m & i_{m+1} & \ldots & i_j \\ 1 & \ldots & m & i_{m+1} & \ldots & i_j \end{array}\right),$$
where $m < i_{m+1} < \ldots < i_j \leq n$.

{\bf Example.} Let ${\mathbf A}$ be a $4 \times 4$ matrix. For $j=3$, we have:
$${\mathbf A}^{(3)}[1] = \left(\begin{array}{ccc} A\left(\begin{array}{ccc} 1 & 2 & 3  \\ 1 & 2 & 3  \end{array}\right) & A\left(\begin{array}{ccc} 1 & 2 & 3  \\ 1 & 2 & 4  \end{array}\right) & A\left(\begin{array}{ccc} 1 & 2 & 3  \\ 1 & 3 & 4  \end{array}\right) \\ A\left(\begin{array}{ccc} 1 & 2 & 4  \\ 1 & 2 & 3  \end{array}\right) & A\left(\begin{array}{ccc} 1 & 2 & 4  \\ 1 & 2 & 4  \end{array}\right) & A\left(\begin{array}{ccc} 1 & 2 & 4  \\ 1 & 3 & 4  \end{array}\right) \\ A\left(\begin{array}{ccc} 1 & 3 & 4  \\ 1 & 2 & 3  \end{array}\right) & A\left(\begin{array}{ccc} 1 & 3 & 4  \\ 1 & 2 & 4  \end{array}\right) & A\left(\begin{array}{ccc} 1 & 3 & 4  \\ 1 & 3 & 4  \end{array}\right) \\ \end{array}\right);$$
$${\mathbf A}^{(3)}[1,2] = \left(\begin{array}{ccc} A\left(\begin{array}{ccc} 1 & 2 & 3  \\ 1 & 2 & 3  \end{array}\right) & A\left(\begin{array}{ccc} 1 & 2 & 3  \\ 1 & 2 & 4  \end{array}\right)  \\ A\left(\begin{array}{ccc} 1 & 2 & 4  \\ 1 & 2 & 3  \end{array}\right) & A\left(\begin{array}{ccc} 1 & 2 & 4  \\ 1 & 2 & 4  \end{array}\right) & \\ \end{array}\right);$$
$${\mathbf A}^{(3)}[1,2,3] =  A\left(\begin{array}{ccc} 1 & 2 & 3  \\ 1 & 2 & 3  \end{array}\right).$$

Now let us prove the following lemma which shows that under some additional conditions on a $Q^2$-matrix $\mathbf A$, some of its diagonal scalings will remain $Q^2$-matrices.

\begin{lemma}\label{main} Let $\mathbf A$ be an $n \times n$ $P$-matrix which satisfies the following conditions:
$${\rm Tr}(({\mathbf A}^{(j)}[1, \ \ldots, \ m])^2)>0, $$
for all $j = 1, \ \ldots, n$ and all $m = 1, \ \ldots, \ j$. Then there is a positive diagonal matrix ${\mathbf D}_{({\mathbf A})}$ with the following two properties:
 \begin{enumerate}
\item[\rm 1.] ${\mathbf D}_{({\mathbf A})}$ is a stabilization matrix for $\mathbf A$.
\item[\rm 2.] ${\mathbf D}_t{\mathbf A} = (t{\mathbf I} + (1-t){\mathbf D}_{({\mathbf A})}){\mathbf A}$ is a $Q^2$-matrix for all $t \in [0,1]$.
\end{enumerate}
\end{lemma}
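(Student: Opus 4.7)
My plan is to build $\mathbf{D}_{(\mathbf{A})}$ by starting from the stabilization matrix produced by Lemma~\ref{diag} and then sharpening its decay enough that the trace inequalities demanded by Lemma~\ref{tech} follow from the hypotheses on $\mathrm{Tr}((\mathbf{A}^{(j)}[1,\ldots,m])^2)$.

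Since $\mathbf{A}$ is a $P$-matrix its leading principal minors are positive, so Lemma~\ref{diag} delivers $\mathbf{D}^0_{(\mathbf{A})} = \diag\{1,\epsilon_2^0,\ldots,\epsilon_n^0\}$ with $1 > \epsilon_2^0 > \cdots > \epsilon_n^0 > 0$ such that every positive diagonal matrix $\mathbf{D}_{(\mathbf{A})} = \diag\{1,\epsilon_2,\ldots,\epsilon_n\}$ with $\epsilon_i/\epsilon_{i+1} \ge \epsilon_i^0/\epsilon_{i+1}^0$ is still a stabilization matrix for $\mathbf{A}$. Enlarging these ratios further preserves stabilization, so the task reduces to choosing the $\epsilon_i$ so that also
\[
T_{km}^{(j)} \;:=\; \mathrm{Tr}\bigl(\mathbf{D}^{(j)}_k\mathbf{A}^{(j)}\mathbf{D}^{(j)}_m\mathbf{A}^{(j)}\bigr) \;>\; 0, \qquad 1\le k,m\le j\le n,
\]
for then Lemma~\ref{tech} immediately yields part~(2) of the statement.

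The analytic input is formula \eqref{DiagForm}: the $S$-th diagonal entry of $\mathbf{D}^{(j)}_m$ is the $m$-th elementary symmetric polynomial $e_m(\{\epsilon_i : i\in S\})$, and when the overall ratio $\rho := \max_i \epsilon_{i+1}/\epsilon_i$ is small it is dominated by the single monomial $\epsilon_{i_1}\cdots\epsilon_{i_m}$ formed from the $m$ smallest indices of $S$. Consequently $(\epsilon_1\cdots\epsilon_m)^{-1}\mathbf{D}^{(j)}_m$ tends entrywise to the diagonal indicator of the leading block $B_m = \{S : |S|=j,\,\{1,\ldots,m\}\subseteq S\}$, whose $B_m\times B_m$ restriction of $\mathbf{A}^{(j)}$ is exactly $\mathbf{A}^{(j)}[1,\ldots,m]$. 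Splitting $\mathbf{D}^{(j)}_m$ into this principal part and a remainder of relative size $O(\rho)$, expanding $T^{(j)}_{km}$, and estimating the remainder terms by the triangle bounds \eqref{oTrace}--\eqref{ooTrace}, one obtains in the diagonal case $k=m$
\[
T_{kk}^{(j)} = (\epsilon_1\cdots\epsilon_k)^2\Bigl[\mathrm{Tr}\bigl((\mathbf{A}^{(j)}[1,\ldots,k])^2\bigr) + O(\rho)\Bigr],
\]
which is strictly positive once $\rho$ is small enough, since the bracketed constant is positive by hypothesis.

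The subtle point, which I expect to be the main obstacle, is the off-diagonal case $k\ne m$: the analogous leading cross-trace $\sum_{\alpha\in B_k,\beta\in B_m}a^{(j)}_{\alpha\beta}a^{(j)}_{\beta\alpha}$ is not controlled directly by the hypotheses on $\mathbf{A}^{(j)}[1,\ldots,m]$. To handle it I would split $T^{(j)}_{km}$ into its \emph{matched} portion $\sum_\alpha d^k_\alpha d^m_\alpha(a^{(j)}_{\alpha\alpha})^2$, which is a sum of strictly positive terms because every $a^{(j)}_{\alpha\alpha}$ is a principal minor of the $P$-matrix $\mathbf{A}$, and an \emph{unmatched} remainder $\sum_{\alpha\ne\beta}d^k_\alpha d^m_\beta a^{(j)}_{\alpha\beta}a^{(j)}_{\beta\alpha}$, which I would bound via \eqref{ooTrace} in terms of $\mathrm{Tr}(|\mathbf{A}^{(j)}|^2)$. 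By further enlarging the ratios, the positive matched part is made to dominate the indefinite unmatched remainder and force $T^{(j)}_{km}>0$. Once all the required trace inequalities are in place, Lemma~\ref{tech} gives that $\mathbf{D}_t\mathbf{A} = (t\mathbf{I} + (1-t)\mathbf{D}_{(\mathbf{A})})\mathbf{A}$ is both a $P$-matrix and a $Q^2$-matrix for every $t\in[0,1]$, which is part~(2); part~(1) holds by construction from the very first step.
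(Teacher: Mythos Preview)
Your overall architecture---start from the stabilization matrix of Lemma~\ref{diag}, sharpen the ratios until the trace conditions of Lemma~\ref{tech} hold, then invoke Lemma~\ref{tech}---is exactly the paper's, and your treatment of the diagonal case $k=m$ (leading block $B_k$, remainder $O(\rho)$, positive limit $\mathrm{Tr}((\mathbf A^{(j)}[1,\dots,k])^2)$) is essentially the paper's computation.

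The genuine gap is your off-diagonal argument. Your ``matched versus unmatched'' split cannot be made to work by shrinking a single ratio parameter $\rho$. Write $T^{(j)}_{km}=\sum_{\alpha,\beta}d^k_\alpha d^m_\beta\,a^{(j)}_{\alpha\beta}a^{(j)}_{\beta\alpha}$; as $\rho\to 0$ the dominant terms of both your matched sum and your unmatched remainder come from $\alpha\in B_k$, $\beta\in B_m$, and both carry exactly the same scale $(\epsilon_1\cdots\epsilon_k)(\epsilon_1\cdots\epsilon_m)$. Dividing through, $T^{(j)}_{km}$ normalised by this scale tends to the \emph{fixed} number $\sum_{\alpha\in B_k,\ \beta\in B_m}a^{(j)}_{\alpha\beta}a^{(j)}_{\beta\alpha}$, which for $k\neq m$ is not the trace of a square and is not controlled by the $P$-matrix property. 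Enlarging ratios changes nothing: the matched and unmatched pieces shrink at exactly the same rate, so no domination occurs. Your proposal also never uses the hypothesis $\mathrm{Tr}((\mathbf A^{(j)}[1,\dots,m])^2)>0$ for $m\neq k$, which is a sign the argument is incomplete.

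The paper resolves this by a different organisation. It does \emph{not} drive a single parameter to zero; it chooses $\epsilon_2,\epsilon_3,\dots,\epsilon_n$ \emph{sequentially}, and at the step where $\epsilon_{l+1}$ is selected it handles all pairs $(k,m)$ with $\max(k,m)=l$ at once. The point is that with $\epsilon_1,\dots,\epsilon_l$ already fixed and $\epsilon_{l+1}$ (hence all later $\epsilon$'s) tending to $0$, \emph{both} $\mathbf D^{(j)}_l$ and $\mathbf D^{(j)}_m$ (for any $m\le l$) concentrate on the \emph{same} block $B_l=\{S:|S|=j,\ [l]\subseteq S\}$: the former with weight $\epsilon_1\cdots\epsilon_l$, the latter with weight $e_m(\epsilon_1,\dots,\epsilon_l)$. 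Consequently the leading part of $T^{(j)}_{lm}$ is a positive constant times $\mathrm{Tr}((\mathbf A^{(j)}[1,\dots,l])^2)$, and the hypothesis for $m=l$ is what saves the cross terms. This sequential mechanism---grouping by $l=\max(k,m)$ and using the hypothesis at level $l$ for the entire group---is the idea missing from your proposal.
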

\begin{proof}
Let us search for the required matrix ${\mathbf D}_{({\mathbf A})}$ in the following form:
\begin{equation}\label{form}
{\mathbf D}_{({\mathbf A})} = \diag\{\epsilon_1, \ \epsilon_2, \ \ldots, \ \epsilon_n\},
\end{equation}
where $1 = \epsilon_1 > \epsilon_2 > \ldots > \epsilon_n > 0$. Since $\mathbf A$ is a $P$-matrix, it satisfies the conditions of Fisher--Fuller theorem (Theorem \ref{FiF}). Thus it is stabilizable. Applying Lemma \ref{diag}, we construct a stabilization matrix $${\mathbf D}_{({\mathbf A})}^0 = \diag\{\epsilon^0_1, \ \epsilon^0_2, \ \ldots, \ \epsilon^0_{n}\},$$ such that any positive diagonal matrix ${\mathbf D}_{({\mathbf A})}$ of Form \ref{form} which satisfies $\epsilon_1 = 1$ and
\begin{equation}\label{*} \frac{\epsilon_i}{\epsilon_{i+1}} \geq \frac{\epsilon_i^0}{\epsilon_{i+1}^0} \qquad i = 1, \ \ldots, \ n-1. \end{equation} is also a stabilization matrix for $\mathbf A$.

Now let us construct the matrix ${\mathbf D}_{({\mathbf A})}$, satisfying both Conditions \ref{*} and the following inequalities:
$${\rm Tr}(({\mathbf D}_{({\mathbf A})})_k^{(j)}{\mathbf A}^{(j)}({\mathbf D}_{({\mathbf A})})_m^{(j)}{\mathbf A}^{(j)}) > 0 $$
for every $j = 1, \ \ldots, \ n$ and every pair $(k,m)$, $1 \leq k,m \leq j$.

We will choose the entries $\epsilon_1, \ \ldots, \ \epsilon_n$ of the required matrix ${\mathbf D}_{({\mathbf A})}$ consequently, taking into account the inequalities $1 = \epsilon_1 > \epsilon_2 > \ldots > \epsilon_n > 0$. First, we put $\epsilon_1 := 1$. Now let us choose $\epsilon_2$. Let us examine all the pairs $(k,m)$ such that $\max(k,m) = 1$. Since $k \geq 1$ and $m \geq 1$, there is only one pair $k = 1$, $m = 1$ which satisfies this condition. So let us choose $\epsilon_2$ such that $\epsilon_2 < \epsilon_2^0$ and ${\rm Tr}(({\mathbf D}_{({\mathbf A})})_1^{(j)}{\mathbf A}^{(j)}({\mathbf D}_{({\mathbf A})})_1^{(j)}{\mathbf A}^{(j)}) > 0 $ for all $j = 1, \ \ldots, \ n-1$. According to Formula \ref{DiagForm}, the following equalities hold for the entries $d^1_{\alpha\alpha}$ of the matrix $({\mathbf D}_{({\mathbf A})})_1^{(j)}$:
\begin{equation}\label{DiagForm1}
d^1_{\alpha\alpha} = \sum_{k = 1}^j\epsilon_{i_k},
\end{equation}
where $\alpha$ is the number in the lexicographic ordering of the set of indices $(i_1, \ \ldots, \ i_j)$, $1 \leq i_1 < \ldots < i_j \leq n$. Examining the lexicographic ordering, it is easy to see that only first ${{n-1} \choose {j-1}}$ of the sets $(i_1, \ \ldots, \ i_j)$ contain the first index $1$ and exactly the first ${{n-1} \choose {j-1}}$ entries $d^1_{\alpha\alpha}$ contain $\epsilon_1 = 1$. Thus we can represent $({\mathbf D}_{({\mathbf A})})_1^{(j)}$ in the following form:
$$({\mathbf D}_{({\mathbf A})})_1^{(j)} = \left(\begin{array}{cc} {\mathbf I}_{{{n-1} \choose {j-1}}}  & 0 \\ 0  & 0 \end{array}\right) + {\mathbf O}_1(\epsilon_2),$$
where ${\mathbf I}_{{{n-1} \choose {j-1}}}$ is an ${{n-1} \choose {j-1}} \times {{n-1} \choose {j-1}}$ identity matrix, ${\mathbf O}_1(\epsilon_2)$ is an ${{n} \choose {j}} \times {{n} \choose {j}}$ positive diagonal matrix. Taking into account Formula \ref{DiagForm1} and the inequalities $1 = \epsilon_1 > \epsilon_2 > \ldots > \epsilon_n$, we obtain the following estimate:
$$\left|{\mathbf O}_1(\epsilon_2)\right| < j\epsilon_2{\mathbf I}. $$

Then, multiplying $\left(\begin{array}{cc} {\mathbf I}_{{{n-1} \choose {j-1}}}  & 0 \\ 0  & 0 \end{array}\right)$ by ${\mathbf A}^{(j)}$, we obtain ${{n-1} \choose {j-1}} \times {{n-1} \choose {j-1}}$ leading principal submatrix of ${\mathbf A}^{(j)}$, which is exactly the matrix ${\mathbf A}^{(j)}[1]$. Thus
$$({\mathbf D}_{({\mathbf A})})_1^{(j)}{\mathbf A}^{(j)}({\mathbf D}_{({\mathbf A})})_1^{(j)}{\mathbf A}^{(j)} = $$ $$\left(\left(\begin{array}{cc} {\mathbf A}^{(j)}[1]  & 0 \\ 0  & 0 \end{array}\right) + {\mathbf O}_1(\epsilon_2){\mathbf A}^{(j)}\right)\left(\left(\begin{array}{cc} {\mathbf A}^{(j)}[1]  & 0 \\ 0  & 0 \end{array}\right) + {\mathbf O}_1(\epsilon_2){\mathbf A}^{(j)}\right) = $$
$$\left(\begin{array}{cc} ({\mathbf A}^{(j)}[1])^2  & 0 \\ 0  & 0 \end{array}\right) + {\mathbf O}_1(\epsilon_2){\mathbf A}^{(j)}\left(\left(\begin{array}{cc} {\mathbf A}^{(j)}[1]  & 0 \\ 0  & 0 \end{array}\right) + {\mathbf O}_1(\epsilon_2){\mathbf A}^{(j)}\right ) + $$ $$ \left(\begin{array}{cc} {\mathbf A}^{(j)}[1]  & 0 \\ 0  & 0 \end{array}\right){\mathbf O}_1(\epsilon_2){\mathbf A}^{(j)} = \left(\begin{array}{cc} ({\mathbf A}^{(j)}[1])^2  & 0 \\ 0  & 0 \end{array}\right) + {\mathbf \Theta}(\epsilon_2).$$

The above equality implies
$${\rm Tr}(({\mathbf D}_{({\mathbf A})})_1^{(j)}{\mathbf A}^{(j)}({\mathbf D}_{({\mathbf A})})_1^{(j)}{\mathbf A}^{(j)}) = {\rm Tr}(({\mathbf A}^{(j)}[1])^2) + {\rm Tr}({\mathbf \Theta}(\epsilon_2)).$$

Since $\left|{\mathbf O}_1(\epsilon_2)\right| < j\epsilon_2{\mathbf I}$, we have the estimates by Formulae \ref{oTrace} and \ref{ooTrace}:
$$\left|{\rm Tr}({\mathbf \Theta}(\epsilon_2))\right| \leq $$ $$ \left|{\rm Tr}({\mathbf O}_1(\epsilon_2)({\mathbf A}^{(j)}[1])^2)\right| + \left|{\rm Tr}({\mathbf O}_1(\epsilon_2){\mathbf A}^{(j)}{\mathbf O}_1(\epsilon_2){\mathbf A}^{(j)})\right| + \left|{\rm Tr}({\mathbf O}_1(\epsilon_2)({\mathbf A}^{(j)}[1])^2)\right| < $$ $$ j\epsilon_2(2{\rm Tr}(|{\mathbf A}^{(j)}[1]|^2) + j\epsilon_2{\rm Tr}(|{\mathbf A}^{(j)}|^2))) \rightarrow 0 \qquad \mbox{as} \ \epsilon_2 \rightarrow 0.$$

Since ${\rm Tr}(({\mathbf A}^{(j)}[1])^2) > 0$, we obtain the estimates
$${\rm Tr}(({\mathbf D}_{({\mathbf A})})_1^{(j)}{\mathbf A}^{(j)}({\mathbf D}_{({\mathbf A})})_1^{(j)}{\mathbf A}^{(j)}) \geq {\rm Tr}(({\mathbf A}^{(j)}[1])^2) - \left|{\rm Tr}({\mathbf \Theta}(\epsilon_2))\right| >$$
$${\rm Tr}(({\mathbf A}^{(j)}[1])^2) - j\epsilon_2(2{\rm Tr}(|{\mathbf A}^{(j)}[1]|^2) + j\epsilon_2{\rm Tr}(|{\mathbf A}^{(j)}|^2))) \geq$$
$$\min_j{\rm Tr}(({\mathbf A}^{(j)}[1])^2) - n\epsilon_2(2\max_j{\rm Tr}(|{\mathbf A}^{(j)}[1]|^2) + n\epsilon_2\max_j{\rm Tr}(|{\mathbf A}^{(j)}|^2))),$$
for all $j = 1, \ \ldots, \ n-1$.

Thus we can choose $\epsilon_2$ such that $\epsilon_2 < \epsilon_2^0$ and
$$\min_j{\rm Tr}(({\mathbf A}^{(j)}[1])^2) - n\epsilon_2(2\max_j{\rm Tr}(|{\mathbf A}^{(j)}[1]|^2) + n\epsilon_2\max_j{\rm Tr}(|{\mathbf A}^{(j)}|^2))) > 0.$$

This inequality implies
$${\rm Tr}(({\mathbf D}_{({\mathbf A})})_1^{(j)}{\mathbf A}^{(j)}({\mathbf D}_{({\mathbf A})})_1^{(j)}{\mathbf A}^{(j)}) > 0$$
for all $j = 1, \ \ldots, \ n-1$.

Assume we have already chosen $\epsilon_1, \ \ldots, \ \epsilon_l$ $(2 \leq l < n)$ such that $1 = \epsilon_1 > \ldots > \epsilon_l > 0$ and
\begin{enumerate}
\item[(a)]$\frac{\epsilon_i}{\epsilon_{i+1}} \geq \frac{\epsilon_i^0}{\epsilon_{i+1}^0}$, $i = 1, \ldots, l-1$.
\item[(b)] ${\rm Tr}(({\mathbf D}_{({\mathbf A})})_k^{(j)}{\mathbf A}^{(j)}({\mathbf D}_{({\mathbf A})})_m^{(j)}{\mathbf A}^{(j)}) > 0$ for all $j = 1, \ \ldots, \ n-1$ and all the pairs $(k,m)$ with $\max(k,m) \leq l-1$, $1 \leq k,m \leq j$.
\end{enumerate}

Let us choose $\epsilon_{l+1}$. For this, we examine all pairs $(k,m)$ with $\max(k,m) = l$ and all $j$ satisfying $j \geq l$. Since $l \leq n-1$, such pairs do exist. Assume that $k = l$, $m \leq l$ (the case of $m = l$, $k \leq l$ is considered analogically). We choose $\epsilon_{l+1}$ satisfying the following conditions:
\begin{enumerate}
\item[(1)] $0 < \epsilon_{l+1} < \epsilon_l$;
\item[(2)] $\frac{\epsilon_l}{\epsilon_{l+1}} \geq \frac{\epsilon_l^0}{\epsilon_{l+1}^0}$;
\item[(3)] ${\rm Tr}(({\mathbf D}_{({\mathbf A})})_l^{(j)}{\mathbf A}^{(j)}({\mathbf D}_{({\mathbf A})})_m^{(j)}{\mathbf A}^{(j)}) > 0$ for all $m$, $1 \leq m \leq l$ and all $j$, $j \geq l$.
\end{enumerate}
For this, we apply the same reasoning as above.

According to Formula \ref{DiagForm}, the following equalities hold for the entries $d^l_{\alpha\alpha}$ of the matrix $({\mathbf D}_{({\mathbf A})})_l^{(j)}$:
$$d^l_{\alpha\alpha} = \sum_{(k_1, \ldots, \ k_l) \subseteq (i_1, \ldots, i_j)}\epsilon_{k_1}\ldots\epsilon_{k_l},$$
where $\alpha$ is the number of the set of indices $(i_1, \ \ldots, i_j)$, $1 \leq i_1 < \ldots < i_j \leq n$, in the lexicographic ordering. Then, considering the lexicographic ordering of the sets $(i_1, \ \ldots, i_j)$, $1 \leq i_1 < \ldots < i_j \leq n$, we obtain that exactly the first ${{n-l}\choose {j-l}}$ of them contains the first indices $(1, \ \ldots, \ l)$. Thus exactly the first ${{n-l}\choose {j-l}}$ entries $d^l_{\alpha\alpha}$ contain the product $\epsilon_1 \epsilon_2 \ldots \epsilon_l$ and we can present $({\mathbf D}_{({\mathbf A})})_l^{(j)}$ in the following form.
$$({\mathbf D}_{({\mathbf A})})_l^{(j)} = \epsilon_1 \ldots \epsilon_l\left(\begin{array}{cc} {\mathbf I}_{{{n-l} \choose {j-l}}}  & 0 \\ 0  & 0 \end{array}\right) + {\mathbf O}_l(\epsilon_{l+1}),$$
where ${\mathbf I}_{{{n-l} \choose {j-l}}}$ is an ${{n-l} \choose {j-l}} \times {{n-l} \choose {j-l}}$ identity matrix, ${\mathbf O}_l(\epsilon_{l+1})$ is an ${{n} \choose {j}} \times {{n} \choose {j}}$ positive diagonal matrix. Since we consider $1 = \epsilon_1 > \epsilon_2 > \ldots > \epsilon_l > \epsilon_{l+1} > \ldots > \epsilon_n > 0$, the following estimate holds:
 $$\left|{\mathbf O}(\epsilon_{l+1})\right| < {j \choose l}\epsilon_1 \ldots \epsilon_{l-1}\epsilon_{l+1}{\mathbf I}.$$

Now let us consider $({\mathbf D}_{({\mathbf A})})_m^{(j)}$, $m \leq l$. Using the same reasoning as above, we represent it in the following form:
 $$({\mathbf D}_{({\mathbf A})})_m^{(j)} = \left(\sum_{(i_1, \ldots, i_m)\subset[l]}\epsilon_{i_1}\ldots\epsilon_{i_m}\right)\left(\begin{array}{cc} {\mathbf I}_{{{n-l} \choose {j-l}}}  & 0 \\ 0  & 0 \end{array}\right) + {\mathbf O}_m(\epsilon_{l+1}),$$
where ${\mathbf I}_{{{n-l} \choose {j-l}}}$ is an ${{n-l} \choose {j-l}} \times {{n-l} \choose {j-l}}$ identity matrix, ${\mathbf O}_m(\epsilon_{l+1})$ is an ${n \choose j} \times {n \choose j}$ positive diagonal matrix.

Since we consider $1 = \epsilon_1 > \epsilon_2 > \ldots > \epsilon_l > \epsilon_{l+1} > \ldots > \epsilon_n > 0$, Formulae \ref{DiagForm} imply the following estimate:
 $$\left|{\mathbf O}_m(\epsilon_{l+1})\right| < {j \choose m} \epsilon_{1} \ldots \epsilon_{m-1}\epsilon_{l+1}{\mathbf I}. $$

Then, multiplying $\left(\begin{array}{cc} {\mathbf I}_{{{n-l} \choose {j-l}}}  & 0 \\ 0  & 0 \end{array}\right)$ by ${\mathbf A}^{(j)}$, we obtain ${{n-l} \choose {j-l}} \times {{n-l} \choose {j-l}}$ leading principal submatrix of ${\mathbf A}^{(j)}$, which is exactly the matrix ${\mathbf A}^{(j)}[1, \ldots, l]$.

Thus we obtain the equality
$$({\mathbf D}_{({\mathbf A})})_l^{(j)}{\mathbf A}^{(j)}({\mathbf D}_{({\mathbf A})})_m^{(j)}{\mathbf A}^{(j)} = $$ $$\left(\epsilon_1 \ldots \epsilon_l\left(\begin{array}{cc} {\mathbf A}^{(j)}[1, \ldots, l]  & 0 \\ 0  & 0 \end{array}\right) + {\mathbf O}_l(\epsilon_{l+1}){\mathbf A}^{(j)}\right)*$$ $$\left(\left(\sum_{(i_1, \ldots, i_m)\subset[l]}\epsilon_{i_1}\ldots\epsilon_{i_m}\right)\left(\begin{array}{cc} {\mathbf A}^{(j)}[1, \ldots, l]   & 0 \\ 0  & 0 \end{array}\right) + {\mathbf O}_m(\epsilon_{l+1}){\mathbf A}^{(j)}\right) = $$
$$\epsilon_1 \ldots \epsilon_l\left(\sum_{(i_1, \ldots, i_m)\subset[l]}\epsilon_{i_1}\ldots\epsilon_{i_m}\right)\left(\begin{array}{cc} ({\mathbf A}^{(j)}[1, \ldots, l])^2  & 0 \\ 0  & 0 \end{array}\right) + $$ $${\mathbf O}_l(\epsilon_{l+1}){\mathbf A}^{(j)}\left(\left(\sum_{(i_1, \ldots, i_m)\subset[l]}\epsilon_{i_1}\ldots\epsilon_{i_m}\right)\left(\begin{array}{cc} {\mathbf A}^{(j)}[1, \ldots, l]   & 0 \\ 0  & 0 \end{array}\right) + {\mathbf O}_m(\epsilon_{l+1}){\mathbf A}^{(j)}\right) + $$ $$\epsilon_1 \ldots \epsilon_l\left(\begin{array}{cc} {\mathbf A}^{(j)}[1, \ldots, l]  & 0 \\ 0  & 0 \end{array}\right){\mathbf O}_m(\epsilon_{l+1}){\mathbf A}^{(j)} =$$
$$ = \epsilon_1 \ldots \epsilon_l\left(\sum_{(i_1, \ldots, i_m)\subset[l]}\epsilon_{i_1}\ldots\epsilon_{i_m}\right)\left(\begin{array}{cc} ({\mathbf A}^{(j)}[1, \ldots, l])^2  & 0 \\ 0  & 0 \end{array}\right) + {\mathbf \Theta}(\epsilon_{l+1}).$$

The above equality implies
$${\rm Tr}(({\mathbf D}_{({\mathbf A})})_l^{(j)}{\mathbf A}^{(j)}({\mathbf D}_{({\mathbf A})})_m^{(j)}{\mathbf A}^{(j)}) = $$
$$\epsilon_1 \ldots \epsilon_l\left(\sum_{(i_1, \ldots, i_m)\subset[l]}\epsilon_{i_1}\ldots\epsilon_{i_m}\right){\rm Tr}(({\mathbf A}^{(j)}[1, \ldots, l])^2) + {\rm Tr}({\mathbf \Theta}(\epsilon_{l+1})).
$$

Since $\left|{\mathbf O}_l(\epsilon_{l+1})\right| < {j \choose l}\epsilon_1 \ldots \epsilon_{l-1}\epsilon_{l+1}{\mathbf I}$
and $\left|{\mathbf O}_m(\epsilon_{l+1})\right| < {j \choose m}\epsilon_1 \ldots \epsilon_{m-1}\epsilon_{l+1}{\mathbf I}$,
 we obtain the following estimate, using Inequalities \ref{oTrace} and \ref{ooTrace}:
$$\left|{\rm Tr}({\mathbf \Theta}(\epsilon_{l+1}))\right| \leq $$
 $$\left(\sum_{(i_1, \ldots, i_m)\subset[l]}\epsilon_{i_1}\ldots\epsilon_{i_m}\right)\left|{\rm Tr}({\mathbf O}_l(\epsilon_{l+1})({\mathbf A}^{(j)}[1,\ \ldots, \ l])^2)\right| + $$ $$\left|{\rm Tr}\left({\mathbf O}_l(\epsilon_{l+1}){\mathbf A}^{(j)}{\mathbf O}_m(\epsilon_{l+1}){\mathbf A}^{(j)}\right)\right| + \epsilon_1 \ldots \epsilon_l\left|{\rm Tr}({\mathbf O}_m(\epsilon_{l+1})({\mathbf A}^{(j)}[1,\ \ldots, \ l])^2)\right|  <$$
$$ {j \choose l}\epsilon_{1} \ldots \epsilon_{l-1}\epsilon_{l+1}\left(\sum_{(i_1, \ldots, i_m)\subset[l]}\epsilon_{i_1}\ldots\epsilon_{i_m}\right){\rm Tr}(|{\mathbf A}^{(j)}[1, \ \ldots, \ l]|^2) + $$ $$ {j \choose l}{j \choose m}\epsilon_{1} \ldots \epsilon_{l-1}\epsilon_{l+1}\epsilon_1 \ldots \epsilon_{m-1}\epsilon_{l+1}{\rm Tr}(|{\mathbf A}^{(j)}|^2) + $$ $$ {j \choose m}\epsilon_{1} \ldots \epsilon_{m-1}\epsilon_{l+1}\epsilon_1 \ldots \epsilon_l{\rm Tr}(|{\mathbf A}^{(j)}[1, \ \ldots, \ l]|^2) \rightarrow 0 \qquad \mbox{as} \ \epsilon_{l+1} \rightarrow 0.$$
Since ${\rm Tr}(({\mathbf A}^{(j)}[1, \ldots, l])^2) > 0$, we have
$${\rm Tr}(({\mathbf D}_{({\mathbf A})})_l^{(j)}{\mathbf A}^{(j)}({\mathbf D}_{({\mathbf A})})_m^{(j)}{\mathbf A}^{(j)}) \geq $$ $$ \epsilon_1 \ldots \epsilon_l\left(\sum_{(i_1, \ldots, i_m)\subset[l]}\epsilon_{i_1}\ldots\epsilon_{i_m}\right){\rm Tr}(({\mathbf A}^{(j)}[1, \ \ldots, \ l])^2) - \left|{\rm Tr}({\mathbf \Theta}(\epsilon_{l+1}))\right| >$$ $$ \epsilon_1 \ldots \epsilon_l\left(\sum_{(i_1, \ldots, i_m)\subset[l]}\epsilon_{i_1}\ldots\epsilon_{i_m}\right){\rm Tr}(({\mathbf A}^{(j)}[1, \ \ldots, \ l])^2) - $$ $$ {j \choose l}\epsilon_{1} \ldots \epsilon_{l-1}\epsilon_{l+1}\left(\sum_{(i_1, \ldots, i_m)\subset[l]}\epsilon_{i_1}\ldots\epsilon_{i_m}\right){\rm Tr}(|{\mathbf A}^{(j)}[1, \ \ldots, \ l]|^2) - $$ $$ {j \choose l}{j \choose m}\epsilon_{1} \ldots \epsilon_{l-1}\epsilon_{l+1}\epsilon_1 \ldots \epsilon_{m-1}\epsilon_{l+1}{\rm Tr}(|{\mathbf A}^{(j)}|^2) -$$ $${j \choose m}\epsilon_{1} \ldots \epsilon_{m-1}\epsilon_{l+1}\epsilon_1 \ldots \epsilon_l{\rm Tr}(|{\mathbf A}^{(j)}[1, \ \ldots, \ l]|^2).$$

Then we can choose $\epsilon_{l+1}$ such that $\epsilon_{l+1} \leq \frac{\epsilon_{l+1}^0}{\epsilon_l^0}\epsilon_l$ and
$$ \epsilon_1 \ldots \epsilon_l\left(\sum_{(i_1, \ldots, i_m)\subset[l]}\epsilon_{i_1}\ldots\epsilon_{i_m}\right)\min_j{\rm Tr}(({\mathbf A}^{(j)}[1, \ \ldots, \ l])^2) - $$ $$ {n \choose l}\epsilon_{1} \ldots \epsilon_{l-1}\epsilon_{l+1}\left(\sum_{(i_1, \ldots, i_m)\subset[l]}\epsilon_{i_1}\ldots\epsilon_{i_m}\right)\max_j{\rm Tr}(|{\mathbf A}^{(j)}[1, \ \ldots, \ l]|^2) - $$ $$ {n \choose l}{j \choose m}\epsilon_{1} \ldots \epsilon_{l-1}\epsilon_{l+1}\epsilon_1 \ldots \epsilon_{m-1}\epsilon_{l+1}\max_j{\rm Tr}(|{\mathbf A}^{(j)}|^2) -$$ $$ {n \choose m}\epsilon_{1} \ldots \epsilon_{m-1}\epsilon_{l+1}\epsilon_1 \ldots \epsilon_l\max_j{\rm Tr}(|{\mathbf A}^{(j)}[1, \ \ldots, \ l]|^2) > 0.$$
This inequality implies ${\rm Tr}(({\mathbf D}_{({\mathbf A})})_l^{(j)}{\mathbf A}^{(j)}({\mathbf D}_{({\mathbf A})})_m^{(j)}{\mathbf A}^{(j)}) > 0$ for all $j = l, \ \ldots, \ n-1$.

Applying the above reasoning $n-1$ times, we construct all the entries $\epsilon_1, \ \ldots, \ \epsilon_n$ of the matrix ${\mathbf D}_{({\mathbf A})}$. For $j = n$, the inequality ${\rm Tr}(({\mathbf D}_{({\mathbf A})})_k^{(n)}{\mathbf A}^{(n)}({\mathbf D}_{({\mathbf A})})_m^{(n)}{\mathbf A}^{(n)}) > 0$ is obvious
for any positive diagonal matrix ${\mathbf D}_{({\mathbf A})}$ and any pair $(k,m)$, $1 \leq k,m \leq n$.

Since the matrix ${\mathbf D}_{({\mathbf A})}$ satisfies Conditions \ref{*}, we obtain by Lemma \ref{diag}, that it is a stabilization matrix for $\mathbf A$. By the construction, it satisfies the conditions of Lemma \ref{tech}. Applying Lemma \ref{tech}, we complete the proof.
\end{proof}

Now let us recall the following basic definitions and facts, namely, Sylvester's Determinant Identity (see, for example, \cite{PINK}, p. 3) and the Schur complement formula (see, for example, \cite{CARLMAR}, \cite{CRAH}). For a given $n \times n$ matrix $\mathbf A$, two sets of indices $(i_1, \ \ldots, i_k)$ and $(j_1, \ \ldots, j_k)$ from $[n]$ and each $l \in [n]\setminus (i_1, \ \ldots, i_k)$, $r \in [n]\setminus (j_1, \ \ldots, j_k)$, we define $$b_{lr} = A \left(\begin{array}{cccc}i_1 & \ldots & i_k & l \\ j_1 & \ldots & j_k & r \end{array}\right).$$ (In such notations, we always mean that the set of indices is arranged in natural increasing order). Then the following identity (Sylvester's Determinant Identity) holds for the minors of the $(n - k)\times(n-k)$ matrix ${\mathbf B} = \{b_{lr}\}$:
$$ B \left(\begin{array}{ccc}l_1 & \ldots & l_p \\ r_1 & \ldots & r_p \end{array}\right) = $$
$$ A \left(\begin{array}{ccc}i_1 & \ldots & i_k \\ j_1 & \ldots & j_k \end{array}\right)^{p-1} A \left(\begin{array}{cccccc}i_1 & \ldots & i_k & l_1 & \ldots & l_p \\ j_1 & \ldots & j_k & r_1 & \ldots & r_p \end{array}\right). $$

Now let us write an initial $n \times n$ matrix $\mathbf A$ in the following block form:
          $${\mathbf A} = \left(\begin{array}{ccc} {\mathbf A}_{kk} & {\mathbf A}_{k,n-k} \\ {\mathbf A}_{n-k,k} & {\mathbf A}_{n-k,n-k} \end{array}\right),$$
where ${\mathbf A}_{kk}$ is a nonsingular leading principal submatrix of ${\mathbf A}$, spanned by the basic vectors $e_1, \ \ldots, \ e_k$, $1 \leq k < n$, ${\mathbf A}_{n-k,n-k}$ is the principal submatrix spanned by the remaining basic vectors. Then the Schur complement $A|A_{kk}$ of ${\mathbf A}_{kk}$ in $\mathbf A$ is defined as follows:
 $$A|A_{kk} = {\mathbf A}_{n-k,n-k}-{\mathbf A}_{n-k,k}{\mathbf A}_{kk}^{-1}{\mathbf A}_{k,n-k}.$$

For the entries of the Schur complement, we have the following equality: $A|A_{kk} = \{c_{lr}\}$, where
$$ c_{lr} = A\left(\begin{array}{cccc} 1 & \ldots & k & l \\ 1 & \ldots & k & r \end{array}\right)/A\left(\begin{array}{ccc} 1 & \ldots & k \\ 1 & \ldots & k \end{array}\right), $$
 $n-k \leq l,r \leq n$.

The following formula connects the inverse of the Schur complement $A|A_{kk}$ with a principal submatrix of ${\mathbf A}^{-1}$:
\begin{equation}\label{ShurInv} (A|A_{kk})^{-1} = {{\mathbf A}^{-1}}_{n-k,n-k}.\end{equation}

Using the properties of Schur complements, we'll prove the following statement.
\begin{lemma}\label{Shur} Let a $P$-matrix $\mathbf A$ be a $Q^2$-matrix. Let ${\mathbf A}$ have at least one nested sequence of principal submatrices, each of which is a $Q^2$-matrix. Then there is a permutation $\theta$ of $[n]$ with the permutation matrix ${\mathbf P}_{\theta}$ such that the matrix ${\mathbf B} = {\mathbf P}_{\theta}^{-1}{\mathbf A}^{-1}{\mathbf P}_{\theta}$ also is a $P$-matrix, a $Q^2$-matrix and satisfies the following conditions:
 \begin{equation}\label{TRcond}{\rm Tr}(({\mathbf B}^{(j)}[1, \ \ldots, \ m])^2) > 0;\end{equation}
for all $j = 1, \ \ldots, \ n$ and all $m = 1, \ \ldots, \ j$.
\end{lemma}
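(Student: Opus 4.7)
The plan is to reduce the trace positivity condition (\ref{TRcond}) to a statement about Schur complements of $\mathbf{B}$, and then use the Schur-complement-inverse formula (\ref{ShurInv}) to relate those Schur complements back to the nested $Q^2$-submatrices of $\mathbf{A}$ supplied by hypothesis.

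That $\mathbf{B}$ is a $P$-matrix and a $Q^2$-matrix will be immediate from the closure of both classes under inversion and permutation similarity: apply Lemma \ref{P}(2),(4) and Lemma \ref{P^2}(2),(4) to $\mathbf{A}^{-1}$, regardless of the choice of $\theta$. So the substantive task is to choose $\theta$ so that (\ref{TRcond}) holds. Let $(i_1, \ldots, i_n)$ be a permutation of $[n]$ such that each principal submatrix $\mathbf{A}_k := \mathbf{A}[i_1, \ldots, i_k]$ is a $Q^2$-matrix, and take $\theta$ to be the \emph{reversed} permutation $\theta(k) := i_{n+1-k}$. With this choice, the trailing $(n-m)\times(n-m)$ principal submatrix of $\mathbf{B}^{-1} = \mathbf{P}_\theta^{-1}\mathbf{A}\mathbf{P}_\theta$ is indexed by the rows and columns $\{i_1, \ldots, i_{n-m}\}$ of $\mathbf{A}$, so it is permutation-similar to $\mathbf{A}_{n-m}$ and hence a $Q^2$-matrix. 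By (\ref{ShurInv}) and Lemma \ref{P^2}(2), the Schur complement $B|B_{mm}$ is then a $Q^2$-matrix for every $m = 1, \ldots, n-1$.

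The key step is to identify $\mathbf{B}^{(j)}[1, \ldots, m]$ as a scaled compound matrix of $B|B_{mm}$. Set $\delta := B(1, \ldots, m \mid 1, \ldots, m)$, which is positive because $\mathbf{B}$ is a $P$-matrix. Sylvester's identity applied with the pivot block $(1, \ldots, m \mid 1, \ldots, m)$, combined with the entrywise formula for the Schur complement, yields
$$B(1, \ldots, m, l_1, \ldots, l_p \mid 1, \ldots, m, r_1, \ldots, r_p) = \delta \cdot (B|B_{mm})(l_1, \ldots, l_p \mid r_1, \ldots, r_p),$$
so that $\mathbf{B}^{(j)}[1, \ldots, m] = \delta \cdot (B|B_{mm})^{(j-m)}$ for $m \leq j$. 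Combining this identity with the Cauchy--Binet formula gives
$${\rm Tr}\left((\mathbf{B}^{(j)}[1, \ldots, m])^2\right) = \delta^2 \cdot {\rm Tr}\left(((B|B_{mm})^2)^{(j-m)}\right),$$
which is $\delta^2$ times the sum of $(j-m)$-th order principal minors of $(B|B_{mm})^2$. Since $B|B_{mm}$ is a $Q^2$-matrix, $(B|B_{mm})^2$ is a $Q$-matrix, so this sum is positive. The boundary case $j = m$ reduces to $\delta^2 > 0$.

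The main obstacle I anticipate is the Sylvester-based identification $\mathbf{B}^{(j)}[1, \ldots, m] = \delta \cdot (B|B_{mm})^{(j-m)}$, together with getting the bookkeeping of the permutation $\theta$ right so that leading principal submatrices of $\mathbf{B}^{-1}$ capture exactly the nested $Q^2$-submatrices of $\mathbf{A}$. Once that structural identity and the choice of $\theta$ are in hand, the rest of the argument is a routine chain of standard facts about compound matrices, inverses, and Schur complements.
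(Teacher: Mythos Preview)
Your proposal is correct and follows essentially the same route as the paper's proof: choose $\theta$ so that the trailing principal submatrices of $\mathbf{B}^{-1}$ coincide (up to permutation similarity) with the nested $Q^2$-submatrices of $\mathbf{A}$, invoke (\ref{ShurInv}) to deduce that each $B|B_{mm}$ is a $Q^2$-matrix, then use Sylvester's identity to obtain $\mathbf{B}^{(j)}[1,\ldots,m] = \delta\cdot (B|B_{mm})^{(j-m)}$ and Cauchy--Binet to conclude the trace positivity. The paper's permutation is written as $\theta(i_m)=n-m+1$ rather than your $\theta(k)=i_{n+1-k}$, but these differ only by the convention for $\mathbf{P}_\theta$ versus $\mathbf{P}_\theta^{-1}$ and produce the same trailing submatrices; your explicit mention of the boundary case $j=m$ is a small addition the paper leaves implicit.
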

\begin{proof} Let us use the notation $A\left[\begin{array}{ccc}i_1 & \ldots & i_j \\ i_1 & \ldots & i_j \end{array}\right]$ for the $j \times j$ principal submatrix of $\mathbf A$ determined by the rows with the indices $(i_1, \ \ldots, \ i_j)$ and the columns with the same indices. (Note that the indices $(i_1, \ \ldots, \ i_j)$ in this notation are re-ordered in the increasing order, as they are in the initial matrix.) We consider the given nested sequence of the principal submatrices of $\mathbf A$, each of which is a $Q^2$-matrix:
$$a_{i_1 i_1}, \  A\left[\begin{array}{cc}i_1 & i_2 \\ i_1 & i_2 \end{array}\right], \ \ldots, \ A\left[\begin{array}{ccc}i_1 & \ldots & i_n \\ i_1 & \ldots & i_n \end{array}\right].$$
This sequence is defined by the permutation $\tau = (i_1, \ \ldots, \ i_n)$ of the set of indices $[n]$. Let us construct the permutation $\theta$ by the following rule:
$\theta(i_m) = n-m+1$, $m = 1, \ \ldots, \ n$. In this case, we obtain the following equalities for the principal submatrices of the matrix $\widetilde{{\mathbf A}} = {\mathbf P}_{\theta}^{-1}{\mathbf A}{\mathbf P}_{\theta}$:
$$A\left[\begin{array}{ccc}i_1 & \ldots & i_{n-m} \\ i_1 & \ldots & i_{n-m} \end{array}\right] = \widetilde{A}\left[\begin{array}{ccc}m+1 & \ldots & n \\ m+1 & \ldots & n \end{array}\right], \qquad m = 0, \ \ldots, \ n-1.$$
Now let us consider the matrix ${\mathbf B} = (\widetilde{{\mathbf A}})^{-1} = {\mathbf P}_{\theta}^{-1}{\mathbf A}^{-1}{\mathbf P}_{\theta}$ and its Schur complements of the form $B|B_{mm}$, where ${\mathbf B}_{mm}$ is the principal submatrix of $\mathbf B$ which consists of the first $m$ rows and columns, $m = 1, \ \ldots, \ n$. Using Formula \ref{ShurInv}, we obtain:
$$(B|B_{mm})^{-1} = \widetilde{A}\left[\begin{array}{ccc}m+1 & \ldots & n \\ m+1 & \ldots & n \end{array}\right] =
A\left[\begin{array}{ccc}i_1 & \ldots & i_{n-m} \\ i_1 & \ldots & i_{n-m} \end{array}\right], \qquad m = 1, \ \ldots, \ n-1.$$

By the conditions, $A\left[\begin{array}{ccc}i_1 & \ldots & i_{n-m} \\ i_1 & \ldots & i_{n-m} \end{array}\right]$ is a $Q^2$-matrix, and it is a $P$-matrix since it is a principal submatrix of a $P$-matrix. Thus $(B|B_{mm})^{-1}$ is also a $Q^2$-matrix and a $P$-matrix.

By Lemma \ref{P^2} and Lemma \ref{P} we obtain that ${\mathbf B}$ is also a $Q^2$-matrix and a $P$-matrix. Now we'll prove Conditions \ref{TRcond}. Applying Lemma \ref{P^2} and Lemma \ref{P} to all $(B|B_{mm})^{-1}$, $m = 1, \ \ldots, \ n-1$, each of which is a $P$-matrix and a $Q^2$-matrix, we obtain that $B|B_{mm}$ is also a $P$-matrix and a $Q^2$-matrix for every $m = 1, \ \ldots, \ n-1$.

Now let us fix $m$, $1 \leq m \leq n-1$ and consider the matrices ${\mathbf B}^{(j)}[1, \ \ldots, \ m]$ for each positive integer $j$, $m < j \leq n$. The matrix ${\mathbf B}^{(j)}[1, \ \ldots, \ m]$ consists of all the minors of the form
 $$B\left(\begin{array}{cccccc}1 & \ldots & m & i_{m+1} & \ldots & i_j \\ 1 & \ldots & m & k_{m+1} & \ldots & k_j \end{array}\right), $$
where $m+1 \leq i_{m+1} < \ldots < i_j \leq n$, $m+1 \leq k_{m+1} < \ldots < k_j \leq n$.
Applying the Silvester determinant identity, we obtain
$$(B|B_{mm})\left(\begin{array}{ccc} l_{1} & \ldots & l_{j-m} \\
r_{1} & \ldots & r_{j-m} \end{array}\right)B\left(\begin{array}{cccccc}1 & \ldots & m  \\ 1 & \ldots & m  \end{array}\right)^{j-m} = $$ $$ B\left(\begin{array}{cccccc}1 & \ldots & m  \\ 1 & \ldots & m \end{array}\right)^{j-m-1}B\left(\begin{array}{cccccc}1 & \ldots & m & l_{1} & \ldots & l_{j-m} \\ 1 & \ldots & m & r_{1} & \ldots & r_{j-m} \end{array}\right).$$
Thus
$$(B|B_{mm})\left(\begin{array}{ccc} l_{1} & \ldots & l_{j-m} \\
r_{1} & \ldots & r_{j-m} \end{array}\right) = $$ $$ \left.B\left(\begin{array}{cccccc}1 & \ldots & m & l_{1} & \ldots & l_{j-m} \\ 1 & \ldots & m & r_{1} & \ldots & r_{j-m} \end{array}\right)\right/B\left(\begin{array}{cccccc}1 & \ldots & m  \\ 1 & \ldots & m \end{array}\right).$$
And we obtain the following equality for the compound matrices:
\begin{equation}\label{Comp}(B|B_{mm})^{(j-m)} = \left.{\mathbf B}^{(j)}[1, \ \ldots, \ m]\right/B\left(\begin{array}{cccccc}1 & \ldots & m  \\ 1 & \ldots & m \end{array}\right).\end{equation}
Applying the Cauchy--Binet formula,
\begin{equation}\label{Comp2}
((B|B_{mm})^2)^{(j-m)} = ((B|B_{mm})^{(j-m)})^2
=  \end{equation} $$ \left.({\mathbf B}^{(j)}[1, \ \ldots, \ m])^2\right/B\left(\begin{array}{cccccc}1 & \ldots & m  \\ 1 & \ldots & m \end{array}\right)^2, $$
for all $m = 1, \ \ldots, \ n$ and all $j$, $j = m+1, \ \ldots, \ n$.
Since $B|B_{mm}$ is a $Q^2$-matrix we have that ${\rm Tr}(((B|B_{mm})^2)^{(j-m)}) > 0$ for all $j = m+1, \ \ldots, \ n$. Since $\mathbf B$ is a $P$-matrix, we have $B\left(\begin{array}{cccccc}1 & \ldots & m  \\ 1 & \ldots & m \end{array}\right) > 0$ for all $m = 1, \ \ldots, \ n$. Thus Equality \ref{Comp2} shows that
$${\rm Tr}(({\mathbf B}^{(j)}[1, \ \ldots, \ m])^2)
= B\left(\begin{array}{cccccc}1 & \ldots & m  \\ 1 & \ldots & m \end{array}\right)^{2}{\rm Tr}(((B|B_{mm})^2)^{(j-m)}) > 0. $$
\end{proof}

The proof of Carlson's theorem uses the fact that if $\mathbf A$ is a sign-symmetric $P$-matrix and ${\mathbf D}$ is an arbitrary positive diagonal matrix then the matrix ${\mathbf D}{\mathbf A}$ is a $P^2$-matrix. However,
the proof of our main result does not require $({\mathbf D}{\mathbf A})^2$ to be a $P$-matrix (and even a $Q$-matrix) for {\bf every} positive diagonal matrix ${\mathbf D}$. This condition is too strong. In the proof below, we use a much weaker condition: under certain assumptions on a $Q^2$-matrix $\mathbf A$, there is {\bf at least one} stabilization matrix ${\mathbf D}_{({\mathbf A})}$ such that $(t{\mathbf I} + (1-t){\mathbf D}_{({\mathbf A})}){\mathbf A}$ is a $Q^2$-matrix for every $t \in [0,1]$ (note that here we multiply $\mathbf A$ by a positive diagonal matrix of a very special form).

\begin{pmain}
Applying Lemma \ref{Shur} to ${\mathbf A}$, we find a permutation matrix ${\mathbf P}_{\theta}$ and construct a $P$-matrix ${\mathbf B} = {\mathbf P}_{\theta}^{-1}{\mathbf A}^{-1}{\mathbf P}_{\theta}$, which is also a $Q^2$-matrix and satisfies the following conditions:
$${\rm Tr}(({\mathbf B}^{(j)}[1, \ \ldots, \ m])^2)> 0, $$
for all $m = 1, \ \ldots, \ j$ and all $j = 1, \ \ldots, \ n$.

Since $\mathbf B$ satisfies the conditions of Lemma \ref{main}, it is stabilizable and we construct a stabilization matrix ${\mathbf D}_{({\mathbf B})}$ such that ${\mathbf D}_t{\mathbf B} = (t{\mathbf I} + (1-t){\mathbf D}_{({\mathbf B})}){\mathbf B}$ is a $Q^2$-matrix for all $t \in [0,1]$.

Now we apply to $\mathbf B$ the reasoning of the proof of Carlson's theorem (see \cite{CARL2}). Since ${\mathbf D}_{({\mathbf B})}$ is a stabilization matrix for $\mathbf B$, we obtain that the matrix ${\mathbf D}_{({\mathbf B})}{\mathbf B}$ has a positive distinct spectrum. Let us consider $({\mathbf D}_t{\mathbf B})^2$ which is a $Q$-matrix for any $t \in [0,1]$ (as it was shown above).
Then, applying Corollary \ref{hersh} (Hershkowitz), we obtain that $({\mathbf D}_t{\mathbf B})^2$ cannot have nonpositive real eigenvalues. Since the eigenvalues of $({\mathbf D}_t{\mathbf B})^2$ are just the squares of the eigenvalues of ${\mathbf D}_t{\mathbf B}$, we obtain that ${\mathbf D}_t{\mathbf B}$ can not have eigenvalues on the imaginary axis, for any $t \in [0,1]$. For $t = 0$, we have that ${\mathbf D}_t{\mathbf B} = {\mathbf D}_{({\mathbf B})}{\mathbf B}$ and all of its eigenvalues are on the right-hand side of the complex plane (they are all positive). Since the eigenvalues changes continuously on $t$, we have that the eigenvalues of ${\mathbf D}_t{\mathbf B}$ can not cross the imaginary axis for any $t \in [0,1]$. So they must stay in the right-hand side of the complex plane. Thus putting $t = 0$, we obtain that ${\mathbf B}$ is positively stable. Taking into account that $ {\mathbf B} = {\mathbf P}_{\theta}^{-1}{\mathbf A}^{-1}{\mathbf P}_{\theta}$ and the eigenvalues of $\mathbf B$ are just the inverses of the eigenvalues of ${\mathbf A}$, we conclude that ${\mathbf A}$ is also positively stable.\end{pmain}

Note, that applying Corollary \ref{hersh} to the matrices $({\mathbf D}_t{\mathbf B})^2$, $t \in [0,1]$ we can localize the spectrum of the initial matrix $\mathbf A$ more precisely: $\lambda \in \sigma({\mathbf A})$ implies $$|\arg(\lambda)| < \frac{\pi}{2} - \frac{\pi}{2n}.$$

\section{Examples and applications}
Theorem \ref{maintheorem} implies Theorem \ref{Carl} (Carlson) and Theorem \ref{Tang} (Tang et al).

\begin{pcarl}If $\mathbf A$ is a sign-symmetric $P$-matrix then all its principal submatrices are also sign-symmetric $P$-matrices. It is easy to see that ${\mathbf A}$ is a $Q^2$-matrix and all its principal submatrices are also $Q^2$-matrices. Thus we obtain that ${\mathbf A}$ satisfies the conditions of Theorem \ref{maintheorem}. Applying Theorem \ref{maintheorem} we get that $\mathbf A$ is positively stable.
\end{pcarl}

\begin{ptang} For the proof, it is enough to show that ${\mathbf A}^2$ is a $Q$-matrix and $\mathbf A$ has a nested sequence of principal submatrices each of which is also a $Q^2$-matrix. In this case, we can show even more: that every principal submatrix of $\mathbf A$ is a $Q^2$-matrix.

First, let us show that if $\mathbf A$ is strictly row square diagonally dominant for every order of minors then so is every principal submatrix of $\mathbf A$. (The same is true for the column dominance.) In this case, we have, by Definition 1.10:
$$\left(A\left(\begin{array}{c} \alpha \\
\alpha \end{array}\right)\right)^2 > \sum_{\alpha,\beta \subset [n], \alpha \neq \beta}\left( A\left(\begin{array}{c} \alpha \\
\beta \end{array}\right)\right)^2,$$
 where $\alpha = (i_1, \ \ldots, \ i_j)$, $1 \leq i_1 < \ \ldots < i_j \leq n$, $\beta = (l_1, \ \ldots, \ l_j)$, $1 \leq l_1 < \ \ldots < l_j \leq n$, $j = 1, \ \ldots, \ n$.
Let us fix an arbitrary $(n-1) \times (n-1)$ submatrix ${\mathbf A}_k$ obtained from $\mathbf A$ by deleting the row and the column with the number $k$, $1 \leq k \leq n$.
Since all the terms in the right-hand side of the above inequality are positive, we obtain for $j = 1, \ \ldots, \ n-1$:
$$\sum_{\alpha,\beta \subset [n], \alpha \neq \beta}\left( A\left(\begin{array}{c} \alpha \\
\beta \end{array}\right)\right)^2> \sum_{\alpha,\beta \subset [n]\setminus\{k\}, \alpha \neq \beta}\left( A\left(\begin{array}{c} \alpha \\
\beta \end{array}\right)\right)^2.$$

Thus ${\mathbf A}_k$ is strictly row square diagonally dominant for every order of minors $1, \ \ldots, \ n-1$ and for each value of $k$, $1 \leq k \leq n$.

Now let us show that both ${\mathbf A}^2$ and ${\mathbf A}_k^2$ are $Q$-matrices. The proof copies the corresponding reasoning of Tang et al (see \cite{TSO}, p. 27, proof of Theorem 3). First, we obtain the estimate:
 $$A^2\left(\begin{array}{c} \alpha \\
\alpha \end{array}\right) = \sum_{\alpha,\beta \subset [n]} A\left(\begin{array}{c} \alpha \\
\beta \end{array}\right)A\left(\begin{array}{c} \beta \\
\alpha \end{array}\right) = $$ $$ \left(A\left(\begin{array}{c} \alpha \\
\alpha \end{array}\right)\right)^2 + \sum_{\alpha,\beta \subset [n], \ \alpha \neq \beta} A\left(\begin{array}{c} \alpha \\
\beta \end{array}\right)A\left(\begin{array}{c} \beta \\
\alpha \end{array}\right) >$$
$$[\mbox{by the condition of square diagonal dominance}] $$
$$> \sum_{\alpha,\beta \subset [n], \ \alpha \neq \beta}\left(\left( A\left(\begin{array}{c} \alpha \\
\beta \end{array}\right)\right)^2 + A\left(\begin{array}{c} \alpha \\
\beta \end{array}\right)A\left(\begin{array}{c} \beta \\
\alpha \end{array}\right) \right).$$

It follows that:
$$\sum_{|\alpha| = j}A^2\left(\begin{array}{c} \alpha \\
\alpha \end{array}\right) > \sum_{|\alpha| = j}\left(\sum_{\alpha,\beta \subset [n], \ \alpha \neq \beta}\left(\left( A\left(\begin{array}{c} \alpha \\
\beta \end{array}\right)\right)^2 + A\left(\begin{array}{c} \alpha \\
\beta \end{array}\right)A\left(\begin{array}{c} \beta \\
\alpha \end{array}\right) \right)\right) = $$ $$ = \sum_{|\alpha| = j}\left(\frac{1}{2}\sum_{\alpha,\beta \subset [n], \ \alpha \neq \beta}\left(\left( A\left(\begin{array}{c} \alpha \\
\beta \end{array}\right)\right)^2 + 2A\left(\begin{array}{c} \alpha \\
\beta \end{array}\right)A\left(\begin{array}{c} \beta \\
\alpha \end{array}\right) + \left( A\left(\begin{array}{c} \beta \\
\alpha \end{array}\right)\right)^2 \right) \right) = $$ $$ = \sum_{|\alpha| = j}\frac{1}{2}\left(\sum_{\alpha,\beta \subset [n], \ \alpha \neq \beta}\left( A\left(\begin{array}{c} \alpha \\
\beta \end{array}\right) + A\left(\begin{array}{c} \beta \\
\alpha \end{array}\right)\right)^2 \right) > 0.$$

Thus a strictly row square diagonally dominant $P$-matrix $\mathbf A$ is a $Q^2$-matrix as well as all its principal submatrices. Applying Theorem \ref{maintheorem}, we obtain that $\mathbf A$ is positively stable.
\end{ptang}

The following matrix classes obviously satisfy the conditions of Theorem \ref{maintheorem}.
\begin{enumerate}
\item[\rm 1.] Hermitian positive definite matrices.
\item[\rm 2.] Strictly totally positive matrices (an $n \times n$ matrix ${\mathbf A}$ is called {\it strictly totally positive} if $\mathbf A$ is (entry-wise) positive and its $j$th compound matrices ${\mathbf A}^{(j)}$ are also positive for all $j = 2, \ \ldots, \ n$).
\item[\rm 3.] Strictly totally J-sign-symmetric matrices (for the definition and properties see \cite{KU}).
\end{enumerate}
\section*{Acknowledgments}
The research leading to these results was carried out at Technische Universit\"{a}t Berlin and has received funding from
the European Research Council under the European Unions Seventh Framework Programme (FP7/20072013)/ERC grant
agreement n 259173.
The author thanks Dr. Michal Wojtylak and Alexander Dyachenko for their valuable comments.

\end{document}